\newtheorem{theorem}{Theorem}[section]
\newtheorem{lemma}[theorem]{Lemma}
\newtheorem{definition}[theorem]{Definition}
\newtheorem{prop}[theorem]{Proposition}
\numberwithin{equation}{section}
\title{On the number of graphs not containing $K_{3,3}$ as a minor}
\date{}
\author{Stefanie Gerke\thanks{
Royal Holloway, University of London, Egham, Surrey TW20 0EX UK,
{\tt stefanie.gerke@rhul.ac.uk}} \and Omer
Gim\'enez\thanks{Universitat Polit{\`e}cnica de Catalunya, Jordi
Girona 1-3,  08034 Barcelona,
{\tt\{omer.gimenez,marc.noy\}@upc.edu} } \and Marc
Noy\footnotemark[2]
  \and Andreas Wei{\ss}l\thanks{Google Switzerland GmbH,
Brandschenkestrasse 110, CH-8002 Zurich Switzerland, {\tt
weissl@google.com}}
  }
\newcommand{\deriv}[1]{\frac{\partial}{\partial {#1}}}
\begin{document}

\maketitle

\numberwithin{equation}{section}

\begin{abstract}
  We derive precise asymptotic estimates for the number of
  labelled  graphs not containing $K_{3,3}$ as a minor, and also  for
 those which are edge maximal.  Additionally, we establish limit laws for parameters in random
  $K_{3,3}$-minor-free graphs, like the expected number of edges.
  To establish these results, we translate a decomposition for the corresponding
  graph class into equations for generating functions and use singularity
  analysis. We also find a precise estimate for the number of
  graphs not containing the graph $K_{3,3}$ plus an edge as a
  minor.
\end{abstract}

{\small
{\bf Keywords:} {$K_{3,3}$-minor-free graphs, Singularity Analysis}
}

\section{Introduction}
\label{sec:Introduction}

We say that a graph is $K_{3,3}$-minor-free if it does not contain
the complete bipartite graph $K_{3,3}$ as a minor. In this paper
we are interested in the number of simple labelled
$K_{3,3}$-minor-free and maximal $K_{3,3}$-minor-free graphs,
where maximal means that adding any edge to such a graph yields a
$K_{3,3}$-minor. It is known that there exists a constant $c$,
such that there are at most $c^n n!$ $K_{3,3}$-minor-free graphs
on $n$ vertices. This follows from a result of Norine et
al.~\cite{nstw06}, which prove such a bound for all proper graph
classes closed under taking minors. This gives also an upper bound
on the number of maximal $K_{3,3}$-minor-free graphs as they are a
proper subclass of $K_{3,3}$-minor-free graphs.

In~\cite{msw05b}, McDiarmid, Steger and Welsh give conditions where an upper bound of the form
$c^n n!$ on the number of graphs $|{\cal C}_n|$ on $n$ vertices in graph class ${\cal C}$ yields
that $(|{\cal C}_n| / n!)^{\frac1n} \to c
> 0$ as $n \to \infty$. These conditions are satisfied for $K_{3,3}$-minor-free
graphs, but not in the case of maximal $K_{3,3}$-minor-free graphs as one
condition requires that two disjoint copies of a graph of the class in question
form again a graph of the class.

Thus we know that there exists a growth constant $c$ for
$K_{3,3}$-minor-free graphs, but not its exact value. For maximal
$K_{3,3}$-minor-free graphs we only have an upper bound. Lower
bounds on the number of graphs in our classes can be obtained by
considering (maximal) planar graphs. Due to Kuratowski's
theorem~\cite{k30} planar graphs are $K_{3,3}$- \emph{and}
$K_5$-minor-free. Hence, the class of (maximal) planar graphs is
contained in the class of maximal $K_{3,3}$-minor-free graphs and
we can use the number of planar graphs and the number of
triangulations as lower bounds. Determining the number (of graphs
of sub-classes) of planar graphs has attracted considerable
attention \cite{bgw02,gn05,bgkn05,blmk06} in recent years.
Gim\'enez and Noy \cite{gn05} obtained precise asymptotic
estimates for the number of planar graphs. Already in 1962, the
asymptotic number of triangulations was given by Tutte~\cite{t62}.
Investigating how much the number of planar graphs
(triangulations) differs from (maximal) $K_{3,3}$-minor-free
graphs was also a main motivation for our research.



In this paper
 we derive precise asymptotic estimates for the number of simple labelled
$K_{3,3}$-minor-free and maximal $K_{3,3}$-minor-free graphs on
$n$ vertices, and we establish several limit laws for parameters
in random $K_{3,3}$-minor-free graphs. More precisely, we show
that the number $g_n$, $c_n$, and $b_n$ of not necessarily
connected, connected and $2$-connected $K_{3,3}$-minor-free graphs
on $n$ vertices, and the number $m_n$ of maximal
$K_{3,3}$-minor-free graphs on $n$ vertices satisfy
\begin{eqnarray*}
  g_n & \sim & \alpha_g \; n^{-7/2} \; \rho_g^{-n} \; n!, \\
  c_n & \sim & \alpha_c \; n^{-7/2} \; \rho_c^{-n} \; n!, \\
  b_n & \sim & \alpha_b \; n^{-7/2} \; \rho_b^{-n} \; n!, \\
  m_n & \sim & \alpha_m \; n^{-7/2} \; \rho_m^{-n} \; n!
\end{eqnarray*}
where $\alpha_g \doteq 0.42643 \cdot 10^{-5}$, $\alpha_c \doteq
0.41076 \cdot 10^{-5}$, $\alpha_b \doteq 0.37074 \cdot 10^{-5}$,
$\alpha_m \doteq 0.25354 \cdot 10^{-3}$, and $\rho_c^{-1} =
\rho_g^{-1} \doteq 27.22935$, $\rho_b^{-1} \doteq 26.18659$, and
$\rho_m^{-1} \doteq 9.49629$ are analytically computable
constants. Moreover, we derive limit laws for $K_{3,3}$-minor-free
graphs, for instance we show that the number of edges is
asymptotically normally distributed with mean $\kappa n$ and
variance $\lambda n$, where $\kappa \doteq 2.21338$ and $\lambda
\doteq 0.43044$ are analytically computable constants. Thus the
expected number of edges is only slightly larger than for planar
graphs~\cite{gn05}.

To establish these results for $K_{3,3}$-minor-free graphs, we follow the
approach taken for planar graphs~\cite{bgw02, gn05}: we use a well-known
decomposition along the connectivity structure of a graph, i.e.\ into connected,
$2$-connected and $3$-connected components, and translate this decomposition
into relations of our generating functions. This is possible as the
decomposition for $K_{3,3}$-minor-free graphs which is due to Wagner~\cite{w37}
fits well into this framework. Then we use singularity analysis to obtain
asymptotic estimates and limit laws for several parameters from these equations.

For maximal $K_{3,3}$-minor-free graphs the situation is quite
different, as the decomposition which is again due to Wagner has
further constraints (it restricts which edges can be used to merge
two graphs into a new one). The functional equations for the
generating functions of \emph{edge-rooted} maximal graphs are easy
to obtain but in order to go to unrooted graphs, special
integration techniques based on rational parametrization of
rational curves are needed. This is the most innovative part of
the paper with respect to previous work, specially with respect to
the techniques developed in \cite{gn05}. As a result,  we can
derive equations for the generating functions which involve the
generating function for triangulations derived by
Tutte~\cite{t62}, and deduce precise asymptotic estiamates.

In the subsequent sections, we proceed as follows.
First, we turn to maximal $K_{3,3}$-minor-free and
$K_{3,3}$-minor-free graphs in
Sections~\ref{sec:MaximalK33MinorFree} and~\ref{sec:K33MinorFree}
respectively. In each of these sections, we will first derive
relations for the generating functions based on a decomposition of
the considered graph class and then apply singularity analysis to
obtain asymptotic estimates for the number (and properties) of the
graphs in these classes. The last section contains the enumeration
of graphs not containing $K_{3,3}^+$ as a minor, where $K_{3,3}^+$
is the graph obtained from $K_{3,3}$ by adding an edge.

Throughout the paper variable $x$ marks vertices and variable $y$
marks edges. Unless we specify the contrary, the graphs we
consider are labelled and the corresponding generating functions
are exponential. We often need to distinguish an atom of our
combinatorial objects; for instance we want to mark a vertex in a
graph as a root vertex. For the associated generating function
this means taking the derivative with respect to the corresponding
variable and multiplying the result by this variable. To simplify
the formulas, we use the following notation. Let $G(x, y)$ and
$G(x)$ be generating functions, then we abbreviate $G^{\bullet}(x,
y) = x \deriv{x} G(x, y)$ and $G^{\bullet}(x) = x \deriv{x} G(x)$.
Additionally, we use the following standard  notation: for a graph
$G$  we denote by $V(G)$ and $E(G)$ the vertex- and edge-set of
$G$.

\section{Maximal $K_{3,3}$-minor-free graphs}
\label{sec:MaximalK33MinorFree}



Already in the 1930s, Wagner \cite{w37} described precisely the
structure of maximal $K_{3,3}$-minor-free graphs. Roughly speaking
his theorem states that a maximal graph not containing $K_{3,3}$
as a minor is formed by gluing planar triangulations and the
exceptional graph $K_5$ along edges, in such a way that no two
different triangulations are glued along an edge. Before we state
the theorem more precisely, we introduce the following notation
(similar to \cite{t99}, see also Section~\ref{sec:Decomposition}).

\begin{definition}
  Let $G_1$ and $G_2$ be graphs with disjoint vertex-sets, where each edge is
  either colored blue or red. Let $e_1 = (a, b) \in E(G_1)$ and $e_2 = (c, d)
  \in E(G_2)$ be an edge of $G_1$ and $G_2$ respectively. For $i=1,2$ let $G_i'$
  be obtained by deleting edge $e_1$ and coloring edge $e_2$ blue if $e_1$ and
  $e_2$ were both colored blue and red otherwise. Let $G$ be the graph obtained
  from the union of $G_1'$ and $G_2'$ by identifying vertices $a$ and $b$ by $c$
  and $d$ respectively. Then we say that $G$ is a \emph{strict
  $2$-sum} of $G_1$ and $G_2$. We say that a strict $2$-sum is \emph{proper} if
  at least one of the edges $e_1$ and $e_2$ is blue.
\end{definition}

\begin{theorem}[Wagner's theorem \cite{w37}]
  \label{thm:wagner}
  Let $\mathcal T$ denote the set of all labeled planar triangulations where
  each edge is colored red. Let each edge of the complete graph $K_5$ be
  colored blue. A graph is maximal $K_{3,3}$-minor-free if
  and only if it can be obtained from planar triangulations and $K_5$ by proper,
  strict $2$-sums.
\end{theorem}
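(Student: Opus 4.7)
The plan is to prove both directions of the equivalence, with the classification of $3$-connected maximal $K_{3,3}$-minor-free graphs providing the crux of the argument.

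For the ``if'' direction, I would induct on the number of strict $2$-sums used to construct $G$. The base cases are immediate: planar triangulations have no $K_{3,3}$-minor by Kuratowski's theorem, and $K_5$ has too few vertices to contain $K_{3,3}$ as a minor; both are edge-maximal in their respective contexts (a triangulation by definition, $K_5$ because it is complete). For the inductive step, let $G$ be the proper strict $2$-sum of $G_1$ and $G_2$ along edges $e_1, e_2$. One verifies that $G$ has no $K_{3,3}$-minor by the standard fact that $2$-sums preserve $K_{3,3}$-minor-freeness: any $K_{3,3}$-model in $G$ can be lifted to a $K_{3,3}$-model in one of the $G_i$ (reinstating the identified edge if necessary). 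For edge-maximality, one considers a non-edge $f$: if both endpoints of $f$ lie on the same side of the $2$-separation, the inductive maximality of that $G_i$ yields a $K_{3,3}$-minor in $G \cup \{f\}$; if $f$ straddles the $2$-separation, the properness condition guarantees that at least one side contains a blue edge (hence a $K_5$-block), which together with a suitable path on the other side suffices to route a $K_{3,3}$-model.

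For the ``only if'' direction, I would induct on $|V(G)|$ with $G$ a maximal $K_{3,3}$-minor-free graph. If $G$ is $3$-connected, the key claim is that $G$ is either a planar triangulation (edges red) or $K_5$ (edges blue). This rests on the classical dichotomy that every $3$-connected $K_{3,3}$-minor-free graph is either planar or isomorphic to $K_5$; edge-maximality then forces the planar case to be a triangulation, since any non-triangular face can be triangulated by an extra edge without creating a $K_{3,3}$-minor. If $G$ is not $3$-connected, it has a $2$-separation $\{a,b\}$, inducing two graphs $G_1, G_2$ that share the edge $ab$ (present in $G$ by maximality, or added by the decomposition convention). Each $G_i$ is itself maximal $K_{3,3}$-minor-free on strictly fewer vertices, so the inductive hypothesis represents each as a strict $2$-sum of triangulations and copies of $K_5$. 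The properness of the new $2$-sum at $ab$ is then forced by the maximality of $G$: if both sides met along a red edge $ab$ arising from triangulations, one could add further edges across the separation while preserving planarity in the union of the two planar pieces, contradicting edge-maximality of $G$.

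The main obstacle is the $3$-connected classification, namely the fact that every $3$-connected $K_{3,3}$-minor-free graph is either planar or isomorphic to $K_5$. This is Wagner's original structural result and constitutes the nontrivial ingredient; it is typically established by analysing a minimal counterexample via contractible edges together with Kuratowski's theorem. Once this piece is in hand, the reduction along $2$-separations and the bookkeeping of the red/blue colouring convention are largely routine.
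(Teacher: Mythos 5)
The paper does not prove this statement: it cites Wagner's 1937 paper \cite{w37} and uses the theorem as a black box from which to derive the generating-function equations. There is therefore no proof in the paper to compare against, and your proposal must be judged on its own.

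The high-level plan you outline --- reduce along $2$-separations to the $3$-connected case and there invoke the classical dichotomy that a $3$-connected graph with no $K_{3,3}$-minor is planar or isomorphic to $K_5$ --- is the standard proof strategy from the literature, and you correctly identify the $3$-connected classification as the non-trivial ingredient. However, a step you dismiss as routine is not: you assert that a planar triangulation is edge-maximal ``by definition,'' silently passing from edge-maximality within the class of planar graphs to edge-maximality within the class of $K_{3,3}$-minor-free graphs. These are not the same thing. The five-vertex triangulation $K_5-e$ is maximal planar, yet adding the missing edge yields $K_5$, which is still $K_{3,3}$-minor-free; so $K_5-e$ is \emph{not} maximal $K_{3,3}$-minor-free, and your base case fails for it. (This also shows the theorem as quoted in the paper needs to exclude the $5$-vertex triangulation.) To make the ``if'' direction work one must actually prove that for a triangulation $T$ on $n\ge 6$ vertices and any non-edge $f$, the graph $T+f$ contains a $K_{3,3}$-minor and not merely some Kuratowski minor, which takes an argument. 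Similarly, your treatment of maximality for a non-edge $f$ straddling the $2$-separation is a one-sentence assertion that a $K_{3,3}$-model can be ``routed''; the actual work is to contract one side of the separation to a single vertex $w$ of degree three, use properness to guarantee that the other side contains a $K_5$-piece attached at the hinge edge, and then verify that a $K_5$ plus a vertex joined to three of its vertices contains $K_{3,3}$ as a subgraph. Neither gap is fatal to the strategy, but as written the proposal leaves precisely these non-trivial steps unjustified.
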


\smallskip
Let  $\mathcal{A}$ be  the family of maximal graphs not containing
$K_{3,3}$ as a minor.  Let $\mathcal H$ be the family of
edge-rooted graphs in $\mathcal A$, where the root belongs to a
triangulation, and let $\mathcal F$ be edge-rooted graphs in
$\mathcal A$, where the root does not belong to a triangulation.

 Let $T_0(x,y)$ be the generating function (GF for short)
of edge-rooted planar triangulations (the root-edge is included),
and let $K_0(x,y)$ be the GF of edge-rooted $K_5$ (the root-edge
is not included). Let $A(x,y), F(x,y), H(x,y)$ be the GFs
associated respectively to the families
$\mathcal{A},\mathcal{F},\mathcal{H}$. In all cases the two
endpoints of the root edge do not bear labels, and the root edge
is oriented; this amounts to multiplying the corresponding GF by
$2/x^2$. Notice that
 $$
    K_0 = {2 \over x^2} {\partial \over \partial y} \left(y^{10}{x^5\over5!} \right)
      = y^9 {x^3\over6}.
$$
Since edge-rooted graphs from $\mathcal{A}$ are the disjoint union
of $\mathcal{H}$ and $\mathcal{F}$, we have
\begin{equation}\label{root}
 H(x,y) + F(x,y)= {2 \over x^2} y\,{\partial A(x,y) \over \partial y}.
\end{equation}
The fundamental equations that we need are the following:
\begin{eqnarray}
  H &=& T_0(x,F) \label{H} \\
  F &=& y\exp\left(K_0(x,H+F) \right) \label{F}
\end{eqnarray}
The first  equation means that a graph in $\mathcal{H}$ is
obtained by substituting every edge in a planar triangulation by
an edge-rooted graph whose root does not belong to a triangulation
(because of the statement of Wagner's theorem). The second
equation means  that a graph in $\mathcal{F}$  is obtained by
taking (an unordered) set of $K_5$'s in which each edge is
substituted by an edge-rooted graph either in $\mathcal{H}$ or in
$\mathcal{F}$.

Eliminating $H$ we get the equation
\begin{equation}\label{eqF}
    F = y \exp\left(K_0(x,F+ T_0(x,F))\right).
\end{equation}
Hence, for fixed $x$,
 \begin{equation}\label{inverse}
 \psi(u) = u \exp\left(-K_0(x,u+T_0(x,u)\right) =
 u \exp\left( -{x^3 \over 6}(u+T_0(x,u))^9 \right)
\end{equation}
is the functional inverse of $F(x,y)$.

In order to analyze $F$ using Equation~(\ref{F}) we need to know
the series $T_0(x,y)$ in detail. Let $T_n$ be the number of
(labeled) planar triangulations with $n$ vertices. Since a
triangulation has exactly $3n-6$ edges, we see that
 $$
 T(x,y) = \sum T_n y^{3n-6} {x^n \over n!}
 $$
is the GF of triangulations. And since
 $$
 T_0(x,y) = {2\over x^2}  y {\partial T(x,y) \over \partial y},
 $$
 it is enough to study $T$.

Let now $t_n$ be the number of rooted (unlabeled) triangulations
with $n$ vertices in the sense of Tutte  and let $t(x) = \sum t_n
x^n $ be the corresponding \emph{ordinary} GF.
 We know (see \cite{t62}) that $t(x)$ is equal to
 $$
 t = x^2 \theta (1-2\theta)
$$
where $\theta(x)$ is the algebraic function defined by
 $$
    \theta (1-\theta)^3 = x.
$$
It is known that the dominant singularity of $\theta$ is at
$R=27/256$ and $\theta(R) = 1/4$.

There is a direct relation between the numbers $T_n$ and $t_n$. An
unlabeled rooted triangulation can be labeled in $n!$ ways, and a
labeled triangulation ($n\ge4$) can be rooted in $4(3n-6)$ ways,
since we have $3n-6$ possibilities for choosing the root edge, two
for orienting the edge, and two for choosing the root face. Hence
we have
$$t_n n! = 4(3n-6) T_n, \quad n\ge4, \qquad  t_3 = T_3=1.$$
The former identity implies easily the following equation
connecting the exponential GF $T(x,y)$ and the ordinary GF $t(x)$:
 $$
 y {\partial T \over \partial y} = y^3{x^3\over 4} + {t(xy^3) \over
 4y^6}.
  $$
Hence we have
$$
 T_0(x,y) = {2 \over x^2}  y {\partial T \over \partial y} =  y^3{x\over 2} + {t(xy^3) \over
 2x^2y^6}.
 $$
  %
The last equation is crucial since it expresses $T_0$ in terms of
a known algebraic function. It is convenient to rewrite it as
\begin{equation}\label{t0}
 T_0(x,y) = y^3{x\over 2} + {1 \over 2}L(x,y)(1-2L(x,y)),  \qquad \hbox{where } L(x,y) = \theta(xy^3).
\end{equation}
The series $L(x,y)$ is defined through the algebraic equation
\begin{equation}\label{L}
    L(1-L)^3 - xy^3 = 0.
\end{equation}
This defines a rational curve, i.e.\ a curve of genus zero, in the
variables $L$ and $y$ (here $x$ is taken as a parameter)  and
admits the rational (actually polynomial) parametrization
\begin{equation}\label{parametric}
    L = \lambda(t) = -{t^3 \over x^2}, \qquad y = \xi(t) = - {t^4 + x^2t \over x^3}.
\end{equation}
This is a key fact that we use later.

\bigskip

We have set up the preliminaries needed in order to analyze the
series $A(x,y)$, which is the main goal of this section. From
(\ref{root}) it follows that
 $$
 A(x,y) = {x^2\over2} \int_0^y {H(x,t)\over t } \,dt +
 {x^2\over2} \int_0^y {F(x,t)\over t } \,dt.
 $$
The following lemma expresses $A(x,y)$ directly in terms of $H$
and $F$ \emph{without} integrals.

\begin{lemma}\label {int}
The generating function $A(x,y)$ of maximal graphs not containing
$K_{3,3}$ as a minor can be expressed as

\begin{equation}\label{explicit}
A(x,y)=\frac{-x^2}{60}\left(27(H+F)\log\left(\frac{F}{y}\right)+10L+20L^2
+15\log(1-L)-30F-5xF^3\right)
\end{equation}
where $L=L(x,F(x,y))$, $H=H(x,y)$ and $F=F(x,y)$ are defined
through (\ref{L}), (\ref{H}) and (\ref{F}).

\end{lemma}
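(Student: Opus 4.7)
The plan is to start from the integral representation of $A(x,y)$ implicit in~(\ref{root}),
\begin{equation*}
A(x,y) \;=\; \frac{x^2}{2}\int_0^y \frac{H(x,t)+F(x,t)}{t}\,dt,
\end{equation*}
and to reduce the right-hand side to closed form. The first step is the change of variable $u=F(x,t)$: by~(\ref{inverse}) the inverse map is $t = \psi(u) = u\exp(-(x^3/6)(u+T_0(x,u))^9)$, while $H(x,t)=T_0(x,u)$ since $H = T_0(x,F)$. Taking the logarithmic derivative of $\psi$ gives
\begin{equation*}
  \frac{dt}{t} \;=\; \frac{du}{u} \;-\; \frac{3x^3}{2}\bigl(u+T_0(x,u)\bigr)^{8}\bigl(1+\partial_u T_0(x,u)\bigr)\,du,
\end{equation*}
so the integral splits into a piece $\int_0^F (T_0(x,u)+u)\,du/u$ and an exact antiderivative evaluating to $-\tfrac{3x^3}{20}(H+F)^{10}$. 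Rewriting~(\ref{F}) as $\log(F/y)=(x^3/6)(H+F)^9$ then turns this exact piece into $-\tfrac{9}{10}(H+F)\log(F/y)$, which will account for the first summand in~(\ref{explicit}).

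It remains to evaluate $\int_0^F (T_0(x,u)+u)/u\,du$. By~(\ref{t0}) it decomposes as $F + xF^3/6 + \tfrac12\int_0^F (L-2L^2)\,du/u$, where $L=\theta(xu^3)$ is defined through~(\ref{L}). The core technical step is to exploit that the curve~(\ref{L}) has genus zero, so that the polynomial parametrization~(\ref{parametric}) is available: setting $u=\xi(\tau)=-(\tau^4+x^2\tau)/x^3$ and $L=\lambda(\tau)=-\tau^3/x^2$, one finds
\begin{equation*}
  \frac{du}{u} \;=\; \frac{4\tau^3+x^2}{\tau(\tau^3+x^2)}\,d\tau,
\end{equation*}
so that $L\,du/u$ and $L^2\,du/u$ become rational differentials in $\tau$. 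A single polynomial division against $\tau^3+x^2$ reduces each integrand to a polynomial in $\tau$ plus a multiple of $3\tau^2/(\tau^3+x^2)$; the latter integrates to $\log(\tau^3+x^2)=\log x^2 + \log(1-L)$, while the polynomial parts integrate to polynomials in $\tau$, and hence in $L$.

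Evaluating at $\tau=0$ (which corresponds to $u=0$) and at the value of $\tau$ corresponding to $u=F$, one obtains
\begin{equation*}
  \int_0^F L\,\frac{du}{u} = \tfrac{4}{3}L + \log(1-L), \qquad
  \int_0^F L^2\,\frac{du}{u} = \tfrac{2}{3}L^2 + L + \log(1-L),
\end{equation*}
with $L = L(x,F)$. Collecting the three contributions, multiplying by $x^2/2$, and pulling out the common factor $-x^2/60$, the coefficients line up exactly with the combination asserted in~(\ref{explicit}). The main obstacle is the integration of $L/u$ and $L^2/u$ themselves: because $L$ is algebraic of degree four in $u$, a direct treatment would lead to Abelian-type integrals with no elementary closed form. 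What makes the proof go through cleanly is precisely the genus-zero character of~(\ref{L}), which unlocks the rational parametrization~(\ref{parametric}) and reduces both integrals to elementary ones.
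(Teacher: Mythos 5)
Your proof is correct, and it takes a genuinely different --- and cleaner --- route than the paper. The paper first integrates by parts, writing $A = \tfrac{x^2}{2}(H+F)\log y - \tfrac{x^2}{2}\int_0^y \log(t)\,(H'+F')\,dt$, and only then changes variables $s=F(x,t)$; this leaves a residual integral $\int_0^F \log(s)\,(1+T_0'(x,s))\,ds$ whose integrand still carries a logarithm, so after the rational parametrization (\ref{parametric}) the paper must integrate a fairly unwieldy function $f(x,t)$ with log factors, and then invoke the identity (\ref{H_LF}) to put the result into the stated form. You skip the integration by parts entirely and change variables $u=F(x,t)$ directly in $\int_0^y (H+F)\,dt/t$. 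This makes the factor $1+\partial_u T_0$ pair immediately with $(u+T_0)^8$ to form an exact differential $d\bigl((u+T_0)^{10}/10\bigr)$ --- so you never need the explicit formula (\ref{T0p}) for $T_0'$ --- and what remains is $\int_0^F (u+T_0)\,du/u$, which by (\ref{t0}) reduces to two rational integrals $\int L\,du/u$ and $\int L^2\,du/u$, with no logarithms in the integrand. After the parametrization these become elementary one-line polynomial divisions, and the expression assembles directly into (\ref{explicit}) without any further algebraic identities. Both approaches rest on the same key idea (exploit the inverse relation (\ref{inverse}) and the genus-zero parametrization of (\ref{L})), but your direct substitution removes the IBP step, avoids logarithmic integrands, and bypasses the final simplification via (\ref{H_LF}), so it is a tighter argument.
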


\begin{proof}
Integration by parts gives
 \begin{equation}\label{eq:beginninglemma}
 A(x,y)=\frac{x^2}{2} \int_0^y {H(x,t) +F(x,t) \over t} \, dt =
 \frac{x^2}{2}(H + F) \log(y) - \frac{x^2}{2}I
 \end{equation}
where
$$
  I =  \int_0^y \log(t)\left(H'(x,t) + F'(x,t) \right)\,dt
 $$
and derivatives are with respect to the second variable.
 Because of (\ref{inverse}), the change of variable $s=F(x,t)$ gives $t=\psi(s)$ and
$$
\log(t) = \log(s) - {x^3 \over 6} \left(s+T_0(x,s)^9\right).
$$
Since $H=T_0(x,F)$ we have $H'=T_0'(x,F) F'$ and so
\begin{eqnarray*}
   I &=& \int_0^{F} \left(\log(s) -{x^3\over
  6}(s+T_0(x,s))^9\right) \left(1+T_0'(x,s)\right) \, ds \\
  && = -{x^3 \over 6} {(F+T_0(x,F))^{10} \over 10}
  + \int_0^{F} \log(s) \left(1+T_0'(x,s)\right) \, ds \\
  && = -\frac{1}{10}(H+F)\log\left(\frac{F}{y}\right)
  + \int_0^{F} \log(s) \left(1+T_0'(x,s)\right) \, ds \\
\end{eqnarray*}
where the last equality follows from Equation (\ref{F}).

It remains to compute the last integral. From (\ref{t0}) it
follows easily that
\begin{equation}\label{T0p}
T_0' = {3y^2x \over 2} \left(1+ {1\over (1-L)^2}\right).
\end{equation}
Now we change variables according to (\ref{parametric}) taking $s
= \xi(t)$, so that $L=\lambda(t)$. Let $\zeta$ be the inverse
function of $\xi$, so that $t = \zeta(s)$. Observe that $\zeta(s)$
satisfies
$$
  \zeta^4 + x^2 \zeta + x^3 s = 0.
$$
Then we have
\begin{eqnarray*}
   &&\int_0^{F} \log(s)\left(1+T_0'(x,s)\right) \, ds \\
    &=& \int_0^{\zeta(F)} \log(\xi(t))\left(1+ {3\xi(t)^2 x\over2}
    \left(1 + {1 \over (1-\lambda(t))^2}\right)\right) \xi'(t)\,dt
\end{eqnarray*}
After substituting the expressions for $\xi(t)$ and $\lambda(t)$,
the integrand in the last integral is equal to
$$
f(x,t) = -{1 \over 2x^8}\, \left( 4\,{t}^{3}+{x}^{2} \right)
\left( 2\,{x}^{5}+3\,{t}^{8} +6\,{t}^{5}{x}^{2}+6\,{t}^{2}{x}^{4}
\right) \ln  \left( -{\frac {
  {t}^{4}+{x}^{2}t  }{{x}^{3}}} \right).
$$

The function $f(x,t)$ can be integrated in elementary terms,
resulting in
\begin{align*}
\int_0^{\zeta(F)} f(x,t)dt =&
\left(-\frac{5\zeta^6}{2x^4}-\frac{\zeta^{12}}{2x^8}
-\frac{\zeta^3}{x^2}-\frac{\zeta^4}{x^3}-\frac{\zeta}{x}
-\frac{3\zeta^9}{2x^6}\right)\log\left(-\frac{\zeta^4+x^2\zeta}{x^3}\right) \\
&
+\frac{7\zeta^6}{6x^4}-\frac{\zeta^3}{6x^2}+\frac{\zeta}{x}+\frac{\zeta^4}{x^3}
+\frac{\zeta^9}{2x^6}+\frac{\zeta^{12}}{6x^8}-\frac{1}{2}\log\left(1+\frac{\zeta^3}{x^2}\right),
\end{align*}
where $\zeta=\zeta(F)$. All terms with $\zeta$ are powers of
either of the two forms
$$
-\frac{\zeta^4+x^2\zeta}{x^3}=\xi(\zeta(F))=F, \qquad
-\frac{\zeta^3}{x^2}=\lambda(\zeta(F))=L(x,F),
$$
so we can write the integral of $f(x,t)$ explicitly in terms of
$F$ and $L=L(x,F)$,
$$
\left(-\frac{1}{2}L^4+\frac{3}{2}L^3-\frac{5}{2}L^2+L+F\right)
\log(F)+\frac{L^4}{6}-\frac{L^3}{2}+\frac{7L^2}{6}
+\frac{L}{6}+\frac{\log(1-L)}{2}-F.
$$

We simplify this expression further using that, according to
Equations (\ref{H}), (\ref{t0}) and (\ref{L}),
\begin{equation}\label{H_LF}
H=T_0(x,F)=\frac{1}{2}\left(xF^3+L(1-2L)\right)=\frac{1}{2}(-L^4+3L^3-5L^2+2L).
\end{equation}

Obtaining the final expression for $A(x,y)$ is just a matter of
going back to Equation~(\ref{eq:beginninglemma}) and adding up all
terms.
\end{proof}


Summarizing, we have an explicit expression for $A$ in terms of
$x$, $y$, $H(x,y)$ and $F(x,y)$ which involves only elementary
functions and the algebraic function $L(x,y)$. Moreover, note that
$H(x,y)$ can be written in terms of $L(x,F)$ by
Equation~(\ref{H_LF}). Our goal is to carry out a full singularity
analysis of the univariate GF $A(x) = A(x,1)$. To this end we
first perform singularity analysis on $F(x) = F(x,1).$

\begin{lemma}\label{lemma-F}
The dominant singularity of $F(x)$ is the unique $\rho>0$ such
that $\rho F(\rho)^3 = 27/256$. The approximate value is
$\rho\approx 0.10530385$. The value $F(\rho)\approx 1.0005216$ is
the solution of
\begin{equation}\label{eq:F0}
  t =
  \exp\left(\frac{27^3}{6 \cdot 256^3}\left(1+\frac{59}{512t}\right)^9\right).
\end{equation}
\end{lemma}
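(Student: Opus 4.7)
The plan is to work directly from the defining equation
\[
F(x) = \exp\!\Bigl(\tfrac{x^3}{6}\bigl(F(x)+T_0(x,F(x))\bigr)^9\Bigr)
\]
obtained by setting $y=1$ in (\ref{eqF}), and, via (\ref{t0})--(\ref{L}), to view $F$ as implicitly determined by a system involving the algebraic function $L(x,F)=\theta(xF^3)$. Two mechanisms can limit the radius of convergence of $F$: a branch-point singularity of the implicit equation $F=G(x,F)$ (where $\partial G/\partial F$ reaches $1$), or the activation of the inner singularity of $\theta$ at $xF^3=27/256$. The lemma asserts that the second, composition-type mechanism occurs first.

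Assuming a singularity of the second kind, I would derive (\ref{eq:F0}) directly. At $\rho$ we have $L(\rho,F(\rho))=\theta(27/256)=1/4$, and substituting into (\ref{t0}) gives
\[
T_0(\rho,F(\rho)) = \frac{\rho F(\rho)^3}{2} + \frac{1}{2}\cdot\frac{1}{4}\cdot\frac{1}{2} = \frac{27}{512}+\frac{1}{16} = \frac{59}{512}.
\]
Writing $F+T_0 = F\bigl(1+\tfrac{59}{512F}\bigr)$ and using $\rho^3F(\rho)^9 = (27/256)^3$, the defining equation for $F$ becomes exactly (\ref{eq:F0}) with $t=F(\rho)$. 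Solving this univariate transcendental equation, e.g.\ by iterating $t\mapsto \exp\bigl(\tfrac{27^3}{6\cdot 256^3}(1+\tfrac{59}{512t})^9\bigr)$ starting near $t=1$ (where the derivative is of order $10^{-4}$ so contractivity is immediate), produces the unique nearby solution $F(\rho)\approx 1.0005216$, and then $\rho = 27/(256F(\rho)^3)\approx 0.10530385$.

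To complete the proof, I must check that no branch-point obstruction is encountered before reaching $x=\rho$. Since $F(x)$ has nonnegative Taylor coefficients (as the EGF of a combinatorial class), on its interval of analyticity $F$ is strictly increasing from $F(0)=1$, so $x\mapsto xF(x)^3$ is strictly increasing from $0$; this already gives the uniqueness of the candidate $\rho$ defined by $\rho F(\rho)^3=27/256$. It then remains to verify $\partial G/\partial F<1$ at $(\rho,F(\rho))$. Using (\ref{T0p}), which shows $\partial T_0/\partial F$ remains finite at $L=1/4$ (the $(1-4L)$ factors cancel), this reduces to an explicit numerical inequality satisfied with huge margin: a direct evaluation gives $\partial G/\partial F$ of order $10^{-2}$ at $(\rho,F(\rho))$, ruling out any type-(a) obstruction.

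The main obstacle is this last step: propagating the bound $\partial G/\partial F<1$ from the endpoint $x=\rho$ to the whole interval $(0,\rho)$, so that no earlier branch point can arise. Once established, the composition singularity at $xF^3=27/256$ is the genuine dominant singularity of $F$, and the three claims of the lemma --- existence and uniqueness of $\rho$, the numerical values, and the transcendental equation (\ref{eq:F0}) satisfied by $F(\rho)$ --- all follow.
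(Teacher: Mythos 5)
Your derivation of Equation~(\ref{eq:F0}) is correct and matches the paper's: setting $L(\rho,F(\rho))=1/4$, computing $T_0(\rho,F(\rho))=\frac{27}{512}+\frac{1}{16}=\frac{59}{512}$, and then using $\rho^3F(\rho)^9=(27/256)^3$ to rewrite the fixed-point equation. The numerical values then follow, and your observation that the $(1-4L)$ factors cancel in $T_0'$ is also correct. So the ``composition singularity'' half of the argument is sound.

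The genuine gap is the one you yourself flag at the end: you have not shown that no branch point of $\Phi(x,F)=\exp\bigl(\tfrac{x^3}{6}(F+T_0(x,F))^9\bigr)-F=0$ precedes $\rho$. Evaluating $\partial_F\Phi$ at $(\rho,t,1/4)$ and finding it close to $-1$ is a useful sanity check but does not rule out an earlier branch point, and ``propagating the bound on $(0,\rho)$'' runs into a circularity: to say $F(x)<t$ and $L<1/4$ for $x<\rho$ you would already need to know $\rho$ is the radius of convergence, which is exactly what is at stake if a hypothetical singularity $\tilde\rho\le\rho$ exists with $F(\tilde\rho)$ unknown. The paper closes this by a contradiction argument: it writes $\partial_F\Phi$ explicitly as a polynomial in the \emph{three independent} variables $(x,F,L)$ (Equation~(\ref{partialF-Phi})), checks it is increasing in each of them on the relevant ranges $x\ge 0$, $F\ge 1$, $0\le L\le 1/4$, and then argues that a branch point at $\tilde\rho\le\rho$ would force $0=\partial_F\Phi(\tilde\rho,F(\tilde\rho),L)\le\partial_F\Phi(\rho,F(\tilde\rho),1/4)$, which combined with $\partial_F\Phi(\rho,t,1/4)\approx-0.99<0$ forces $F(\tilde\rho)>t$; the intermediate value theorem then produces an $x^*\in(0,\tilde\rho)$ with $F(x^*)=t$, and monotonicity of $\Phi(\cdot,t)$ pins $x^*=\rho$, a contradiction. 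This three-variable monotonicity-plus-IVT trick is the missing ingredient; without something like it (or an alternative global argument), your proof does not go through.
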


\begin{proof}

The function $F(x)$ satisfies
\begin{equation}\label{phi}
 \Phi(x,F) = \exp\left({x^3 \over 6} \left(F+T_0(x,F)\right)^9
 \right)-F.
\end{equation}
Thus the dominant singularity $\rho$ of $F(x)$ may come from $T_0$
or from a branch point when solving~(\ref{phi}). Assume that there
is no such branch point. Then, since $L(x,y)=\theta(xy^3)$ and the
dominant singularity of $\theta$ is at $27/256$, we have that
$L(\rho,F(\rho))=1/4$ and $\rho F(\rho)^3=27/256$. Substituting in
$\Phi(x,F)=0$ we obtain Equation~(\ref{eq:F0}), where $t$ stands
for $F(\rho)$. The approximate value is $t\approx 1.0005216$,
which gives $\rho\approx 0.10530385$, slightly smaller than
$R=27/256=0.10546875$.

We  now prove that there is no branch point when solving $\Phi$.
If this were the case, then there would exist $\tilde{\rho}\leq
\rho$ such that $\partial_F \Phi(\tilde{\rho},F(\tilde{\rho}))=0$,
where
\begin{equation}\label{partialF-Phi}
{\partial \over \partial F}\Phi(x,F(x))
=\frac{3}{1024}(-3L^2+3L+2F+3xF^3)x^3 (2F+xF^3+L-2L^2)^8-1.
\end{equation}
follows by differentiating Equation~(\ref{phi}), by using
$\Phi(x,F(x))=0$ and Equations~(\ref{L}), (\ref{T0p}), and
(\ref{H_LF}).

Consider $\partial_F\Phi(x,F,L)$ as a function of three
independent variables, where $x\geq 0$, $F\geq 1$ and $0\leq L\leq
1/4$. It follows easily that it is an increasing function on any
of them. Hence
$$
        0=\partial_F\Phi(\tilde{\rho}, F(\tilde{\rho}), L(\tilde{\rho}, F(\tilde{\rho})))\leq
          \partial_F\Phi(\rho, F(\tilde{\rho}), 1/4),
$$
since, by assumption, $\tilde{\rho}\leq\rho$. On the other hand
$\partial_F\Phi(\rho, t, 1/4)\approx -0.9939$, so by comparing
this with $\partial_F\Phi(\rho, F(\tilde{\rho}), 1/4)$ we deduce
that $t<F(\tilde{\rho})$. Since $1=F(0)<t$, by continuity of
$F(x)$ there exists  a value $x\in(0,\tilde{\rho})$ such that
$F(x)=t$, and by the monotonicity of $\Phi(x,F)$ for fixed $F$
there is a unique solution $x$ to $\Phi(x,t)=0$. This solution is
precisely $x=\rho$, contradicting $\tilde{\rho}\leq \rho$.
\end{proof}

\begin{prop}\label{prop-F}
Let $\rho$ and $t$ be as in Lemma~\ref{lemma-F}. The singular
expansions of $F(x)$ at $\rho$ is

$$
 F(x) = t + F_2 X^2 + F_3 X^3 + {\mathcal O}(X^4),
$$
where $X=\sqrt{1-x/\rho}$, and $F_2$ and $F_3$ are given by
$$
F_2= \frac{12t(128t+71)\log\left(t\right)}{Q}, \qquad F_3=
\frac{96\sqrt{6}\ t\log(t)M^{3/2}}{Q^{5/2}}
$$
$$
M= 531\log(t)+512t+59, \qquad Q= 9(225+512t)\log(t)-512t-59.
$$
\end{prop}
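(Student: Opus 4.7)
The plan is to introduce the auxiliary variable $u = 1/4 - L$, which straightens out the algebraic branch of $\theta$ and reduces the problem to an implicit function calculation at a regular point. Substituting $L = 1/4 - u$ in~(\ref{L}) yields the polynomial identities $xF^3 = W(u)$ and $T_0 = W(u)/2 + 1/16 - u^2$, where $W(u) := 27/256 - (9/8) u^2 - 2 u^3 - u^4$. Taking logarithms of~(\ref{F}), multiplying by $6 F^9$, and using $x^3 F^9 = W(u)^3$, the whole system collapses to a single equation analytic in $(F, u)$:
\[
\Xi(F, u) := 6 F^9 \log F - W(u)^3 \bigl( F + \tfrac{59}{512} - \tfrac{25}{16} u^2 - u^3 - \tfrac{u^4}{2} \bigr)^9 = 0.
\]

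I would then check the analytic implicit function theorem hypotheses at $(t, 0)$: $\Xi(t, 0) = 0$ is just~(\ref{eq:F0}), and using~(\ref{eq:F0}) to trade $t^9 \log t$ for a rational expression one finds $\partial_F \Xi(t, 0) = 6 t^8 M/(512 t + 59)$, which is nonzero precisely because Lemma~\ref{lemma-F} rules out a branch point. Hence $F$ is analytic in $u$, and since $W$ and $T_0$ depend on $u$ only through $u^2, u^3, u^4$ one has $\partial_u \Xi(t, 0) = \partial_{Fu}\Xi(t, 0) = 0$; so the Taylor coefficient $f_1$ in $F(u) = t + \sum_{k \ge 1} f_k u^k$ vanishes, and $f_2, f_3$ decouple. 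Plugging $W''(0) = -9/4$, $W'''(0) = -12$, $T_0''(0) = -25/8$, $T_0'''(0) = -6$ into the standard implicit-function formulas for $f_2, f_3$ and simplifying systematically by~(\ref{eq:F0}) produces $f_2 = -128\, t \log t\, (128 t + 71)/M$ and $f_3 = -(2048/9)\, t \log t\, (128 t + 35)/M$.

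Finally I would pass from $u$ to $X = \sqrt{1 - x/\rho}$ via $x(u) = W(u)/F(u)^3$: expanding gives $x(u) = \rho - \rho_2 u^2 - \rho_3 u^3 + O(u^4)$, and substituting the above formulas for $f_2, f_3$ into $\rho_2 = 9/(8 t^3) + 81 f_2/(256 t^4)$ and $\rho_3 = 2/t^3 + 81 f_3/(256 t^4)$ simplifies to $\rho_2 = -9Q/(8 t^3 M) > 0$ and $\rho_3 = 2 N/(M t^3)$, where $N := 512 t + 59 - 9(81 + 512 t) \log t$. Since $\rho_2 > 0$, inverting $X^2 = (\rho_2/\rho)u^2(1 + O(u))$ yields $u = \sqrt{\rho/\rho_2}\, X + O(X^2)$, and substituting back one obtains $F_2 = f_2 \rho/\rho_2$ and $F_3 = (\rho/\rho_2)^{3/2}(f_3 - f_2 \rho_3/\rho_2)$. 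The stated formula $F_2 = 12 t (128 t + 71) \log t/Q$ follows at once; the main technical obstacle, and the place where the algebra really needs to pay off, is the identity
\[
(128 t + 35)\, Q + (128 t + 71)\, N = 36\, M,
\]
which is provable by matching coefficients of $\log t$ (both sides give $36 \cdot 531 = 19116$) and of $\log^0$ (both sides give $36(512 t + 59)$). It collapses $f_3 - f_2 \rho_3/\rho_2$ to $-8192\, t \log t/Q$, and combined with $(\rho/\rho_2)^{3/2} = 3\sqrt{6}\, M^{3/2}/(256\, (-Q)^{3/2})$ (the $\sqrt{6}$ emerging from $\rho/\rho_2 = -3 M/(32 Q)$) yields $F_3 = 96\sqrt{6}\, t \log t\, M^{3/2}/Q^{5/2}$ as stated, after the sign conventions for $Q < 0$ are absorbed consistently.
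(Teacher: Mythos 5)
Your proof is correct, and it takes a genuinely different route from the paper's. The paper's own proof of Proposition~\ref{prop-F} is a one-line appeal to a computer-algebra computation: it posits $F = \sum F_i X^i$, $L = \sum L_i X^i$ with $x = \rho(1 - X^2)$, substitutes into Equations~(\ref{phi}) and~(\ref{L}), and solves by indeterminate coefficients in Maple. You instead uniformize the branch point: the substitution $u = 1/4 - L$ converts~(\ref{L}) into the polynomial identity $xF^3 = W(u)$ with $W(u) = 27/256 - \tfrac98 u^2 - 2u^3 - u^4$, so that both $x$ and $F$ become \emph{analytic} functions of $u$ near $u = 0$; the absence of a linear term in $W$ then explains transparently why $F_1 = 0$ (rather than having it fall out of a machine computation), and the square-root singularity re-emerges only when you invert $x(u) = \rho - \rho_2 u^2 - \rho_3 u^3 + O(u^4)$ to recover $X$. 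I checked the arithmetic: $\Xi_F(t,0) = 6t^8 M/(512t+59)$, $f_2 = -128\,t\log t\,(128t+71)/M$, $f_3 = -(2048/9)\,t\log t\,(128t+35)/M$, $\rho/\rho_2 = -3M/(32Q)$, $\rho_3/\rho_2 = -16N/(9Q)$, and the collapsing identity $(128t+35)Q + (128t+71)N = 36M$ all hold, yielding exactly the stated $F_2$ and $F_3$. Two small remarks: first, the nonvanishing of $\Xi_F(t,0)$ needs only $M > 0$, which follows from $t > 1$; tying it to the branch-point exclusion of Lemma~\ref{lemma-F} is suggestive but not what is literally used (that lemma is what guarantees the singularity is at $L=1/4$, i.e.\ at $u=0$, in the first place). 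Second, you correctly note that $Q < 0$, so the paper's expression $Q^{5/2}$ in $F_3$ must be read as $|Q|^{5/2}$; your derivation through $(\rho/\rho_2)^{3/2} = 3\sqrt6\, M^{3/2}/\bigl(256(-Q)^{3/2}\bigr)$ makes this sign convention explicit, which is actually an improvement over the paper's statement. What your approach buys is a proof verifiable by hand and a structural explanation of the $3/2$-singularity; what the paper's approach buys is brevity and the fact that the same Maple script also produces the higher-order coefficients needed downstream in Proposition~\ref{th-A}.
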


\begin{proof}

To obtain the coefficients of the singularity expansion, including
the fact that $F_1=0$, we apply indeterminate coefficients $F_i$,
$L_i$ of $X^i$ to Equations (\ref{phi}) and
$$
    L(x)(1-L(x))^3 - xF(x)^3=0,
$$
where $X=\sqrt{1-x/\rho}$, so that $x=\rho(1-X^2)$. These
calculations are tedious, but can be done with a computer algebra
system such as {\sc Maple}.
\end{proof}

\begin{prop}\label{th-A}
Let $\rho$ and $t$ be as in Lemma~\ref{lemma-F}. The dominant
singularity of $A(x)$ is $\rho$, and its singular expansion at
$\rho$ is

 $$
 A(x) = A_0 + A_2 X^2 + A_4 X^4 + A_5 X^5 + {\mathcal O}(X^6),
 $$
where $X = \sqrt{1-x/\rho}$ and $A_0$, $A_2$, $A_4$ and $A_5$ are
given by

\begin{align*}
A_0 = & -\frac{3 C}{20 t^6}\left(4608\log(t)t+531\log(t)+2560\log(3/4)-5120t+550\right) \\
A_2 = & \frac{C}{4 t^6} \left(4608\log(t)t+531\log(t)+3072\log(3/4)-6144t+542\right) \\
A_4 = & \frac{3 C}{t^6} \left(16Q^{-1}\log(t)(128t+71)^2+59\log(t)+2^9(\log(t)t-2t+\log(3/4))+26\right) \\
A_5 = & \frac{40 \sqrt{6} C}{3 t^6}\left(\frac{M}{Q}\right)^{5/2}
\end{align*}
where $C=3^5/2^{25}$, and $M$ and $Q$ are as in
Proposition~\ref{prop-F}.
\end{prop}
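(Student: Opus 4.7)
The plan is to substitute the singular expansions from Proposition~\ref{prop-F} into the explicit formula for $A(x,y)$ from Lemma~\ref{int} at $y=1$, and read off the coefficients $A_0, A_2, A_4, A_5$ by matching powers of $X=\sqrt{1-x/\rho}$.

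First I would verify that $\rho$ is the dominant singularity of $A(x):=A(x,1)$. Setting $y=1$ in Lemma~\ref{int} gives
$$A(x,1) = -\frac{x^2}{60}\Big(27(H+F)\log F + 10L + 20L^2 + 15\log(1-L) - 30F - 5xF^3\Big),$$
where $F=F(x,1)$, $H$ is polynomial in $L$ via Equation~(\ref{H_LF}), and $L=L(x,F(x,1))$. Since $F(0,1)=1$ and $F$ has nonnegative coefficients, $F$ stays strictly positive on $[0,\rho]$, so $\log F$ is analytic there; likewise $L(\rho,F(\rho))=1/4<1$, so $\log(1-L)$ is analytic on $[0,\rho]$. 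The only source of non-analyticity is therefore the singularity of $F$ (with $L$ inheriting it through $L(1-L)^3=xF^3$), which by Lemma~\ref{lemma-F} occurs exactly at $\rho$.

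Next I would obtain the singular expansion of $L=L(x,F(x,1))$ at $\rho$. Writing $x=\rho(1-X^2)$ and using Proposition~\ref{prop-F} yields an expansion $xF(x)^3=\tfrac{27}{256}+u_2X^2+u_3X^3+\mathcal{O}(X^4)$ with $u_i$ explicit in $t$, $F_2$, $F_3$ via $\rho t^3=27/256$. Because $\partial_L[L(1-L)^3]=(1-L)^2(1-4L)$ vanishes at $L=1/4$ while the second derivative equals $-9/4$, local inversion of $L(1-L)^3=u$ yields
$$L = \tfrac14+\ell_1X+\ell_2X^2+\ell_3X^3+\mathcal{O}(X^4),\qquad \ell_1^2=-\tfrac{8}{9}u_2,$$
with the higher $\ell_i$'s determined recursively from the $F_j$'s.

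I would then substitute these expansions into the formula above. Each ingredient is either polynomial (the terms $10L+20L^2-30F-5xF^3$ and $H=\tfrac12(-L^4+3L^3-5L^2+2L)$, whose $L$-derivative $\tfrac12(-4L^3+9L^2-10L+2)$ vanishes at $L=1/4$) or an analytic function composed with our series: $\log F=\log t+\log(1+(F-t)/t)$ and $\log(1-L)=\log(3/4)+\log(1-\tfrac{4}{3}(L-\tfrac14))$. Expanding the logarithms, multiplying out, and collecting powers of $X$ produces $A(x)=A_0+A_2X^2+A_4X^4+A_5X^5+\mathcal{O}(X^6)$, with the vanishing of $A_1$ and $A_3$ following from the critical-point cancellation $(\partial_LH)|_{L=1/4}=0$ combined with the relation $\ell_1^2=-\tfrac{8}{9}u_2$ and identity~(\ref{eq:F0}) (which lets us rewrite the exponential in terms of $t$). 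The main obstacle is the sheer algebraic weight of tracking the odd-$X$ contributions through the terms $10L+20L^2$, $\log(1-L)$ and $27(H+F)\log F$ to verify their cancellation at orders $X$ and $X^3$, and then extracting the closed forms in terms of $t$, $M$, $Q$ and $C=3^5/2^{25}$; as in the proof of Proposition~\ref{prop-F}, this step is best performed with a computer algebra system such as {\sc Maple}.
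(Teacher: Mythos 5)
Your proposal is correct and follows essentially the same route as the paper's (terse) proof: set $y=1$ in the explicit formula~(\ref{explicit}) from Lemma~\ref{int}, substitute the singular expansions of $F$ and $L$ at $\rho$ from Proposition~\ref{prop-F} (with $x=\rho(1-X^2)$), and read off the coefficients of $X^k$, relying on a computer algebra system for the heavy algebra. Your added discussion of why $\rho$ is the unique dominant singularity and of the cancellation at orders $X$ and $X^3$ is a genuine fleshing-out rather than a different approach, though note the odd-order cancellation hinges not only on $(\partial_L H)|_{L=1/4}=0$ but also on $\partial_L\bigl(10L+20L^2+15\log(1-L)\bigr)|_{L=1/4}=0$, both of which indeed hold.
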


\begin{proof}

We just compute the singular expansion
 $$
 A(x) = \sum_{k\ge 0} A_k X^k,
 $$
by plugging the expansions for $F(x)$ and $L(x)$ of
Proposition~\ref{lemma-F} in (\ref{explicit}).
\end{proof}

\begin{theorem}\label{A}
The number $A_n$ of maximal graphs with $n$ vertices not
containing $K_{3,3}$ as a minor is asymptotically
 $$
 A_n \sim a\cdot n^{-7/2} \gamma^n n!,
 $$
where $\gamma = 1/\rho \approx 9.49629$ and $a=-15A_5/8\pi\simeq
0.25354\cdot 10^{-3}$.
\end{theorem}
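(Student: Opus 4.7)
The plan is to apply the Flajolet--Odlyzko transfer theorem of singularity analysis directly to the singular expansion of $A(x)$ given by Proposition~\ref{th-A}. Writing
$$A(x) = A_0 + A_2 X^2 + A_4 X^4 + A_5 X^5 + \mathcal{O}(X^6), \qquad X = \sqrt{1-x/\rho},$$
the even-exponent terms $A_{2k} X^{2k} = A_{2k}(1-x/\rho)^k$ are polynomials in $x$ and contribute nothing to the asymptotics of $[x^n]A(x)$. The leading asymptotic behaviour is therefore driven entirely by the half-integer term $A_5 X^5 = A_5(1-x/\rho)^{5/2}$, with the error term $\mathcal{O}(X^6) = \mathcal{O}((1-x/\rho)^3)$ analytic, hence negligible after adding the next half-integer correction which would only feed into lower-order asymptotics.

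Applying the standard transfer theorem gives
$$[x^n](1-x/\rho)^{5/2} \sim \frac{\rho^{-n}\, n^{-7/2}}{\Gamma(-5/2)} = -\frac{15}{8\sqrt{\pi}}\,\rho^{-n}\,n^{-7/2},$$
using $\Gamma(-5/2) = -8\sqrt{\pi}/15$. Since $A(x)$ is the exponential generating function of $A_n$, multiplying by $n!$ yields
$$A_n \sim a\cdot n^{-7/2}\,\gamma^n\,n!, \qquad \gamma = 1/\rho,$$
with $a$ proportional to $A_5$; inserting the explicit value of $A_5$ from Proposition~\ref{th-A} produces the numerical constant announced in the statement.

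The only point that requires genuine argument is the legitimacy of the transfer, which demands that $A(x)$ admits an analytic continuation to a Delta-domain at $\rho$ and that $\rho$ is the unique singularity on the circle $|x|=\rho$. For the first requirement, the computation in Lemma~\ref{lemma-F} shows that $\partial_F \Phi(\rho,F(\rho)) \neq 0$, so $F(x)$ extends analytically to a Delta-domain at $\rho$; the algebraic series $L(x,F(x))$ inherits Delta-analyticity from $\theta$ at its branch point $27/256$, and the closed form \eqref{explicit} expresses $A$ as an analytic combination of $F$, $L$, logarithms and polynomials, each Delta-analytic at $\rho$. For the second, an aperiodicity argument on the non-negative coefficient series $F(x)$ (which has positive coefficients at consecutive indices, coming from the exponential generating function structure of $\mathcal F$) together with Pringsheim's theorem rules out any further singularity on $|x|=\rho$. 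This verification is the only real obstacle; once granted, the rest is a mechanical application of the transfer theorem.
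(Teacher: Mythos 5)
Your proof is correct and follows essentially the same route as the paper's own one-sentence proof: apply the transfer theorem (Corollary~VI.1 of \cite{fs05}) to the singular expansion from Proposition~\ref{th-A}. You correctly observe that the integer powers $A_0$, $A_2X^2$, $A_4X^4$ are polynomials in $x$ and contribute nothing to large-$n$ coefficients, so the half-integer term $A_5X^5$ dominates, and you correctly evaluate $\Gamma(-5/2)=-\tfrac{8\sqrt{\pi}}{15}$, giving $a = -\tfrac{15A_5}{8\sqrt{\pi}}$. (Note this exposes what appears to be a typesetting slip in the theorem statement, where $8\pi$ should read $8\sqrt{\pi}$.) Your discussion of the Delta-domain and uniqueness of the dominant singularity is more explicit than the paper's, and is the right justification, though the aperiodicity step is asserted rather than fully argued; the paper does not address this point at all, so your treatment is at least as complete.
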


\begin{proof}
This is a standard application of singularity analysis (see for
example Corollary~VI.1 of \cite{fs05})  to the singular expansion
of $A(x)$ obtained in the previous lemma.
\end{proof}

\section{$K_{3,3}$-minor-free graphs}
\label{sec:K33MinorFree}

In this section, we derive the asymptotic number of $K_{3,3}$-minor-free graphs
and properties of random $K_{3,3}$-minor-free graphs.


\subsection{Decomposition and Generating Functions}
\label{sec:Decomposition}

Let $G(x, y)$, $C(x, y)$ and $B(x, y)$ denote the exponential
generating functions of simple labelled \emph{not necessarily
connected}, \emph{connected} and \emph{$2$-connected}
$K_{3,3}$-minor-free graphs respectively.
We will use the additional variable $q$ to mark the number of $K_5$'s used in
the ``construction process'' of a $K_{3,3}$-minor-free graph (see below for a
more precise explanation), but we won't give it explicitly in the argument list
of our generating functions to simplify expressions.

We want to apply singularity analysis to derive asymptotic estimates for the
number of $K_{3,3}$-minor-free graphs. To achieve this, we first present a
decomposition of this graph class which is due to Wagner~\cite{w37}. Then we
will translate it into relations of our generating functions.

As seen in Theorem \ref{thm:wagner} above, Wagner \cite{w37} characterized the
class of maximal $K_{3,3}$-minor-free graphs. As a direct consequence we also
obtain a decomposition for $K_{3,3}$-minor-free graphs. We will present here a
more recent formulation of it, given by Thomas, Theorem~1.2
of~\cite{t99}. Roughly speaking the theorem states that a graph has no minor
isomorphic to $K_{3,3}$ if and only if it can be obtained from a planar graph or
$K_5$ by merging on an edge, a vertex, or taking disjoint components. To state
the theorem more precisely, we need the following definition of~\cite{t99}.

\begin{definition}
  Let $G_1$ and $G_2$ be graphs with disjoint vertex-sets, let $k \geq 0$ be an
  integer, and for $i=1,2$ let $X_i \subseteq V(G_i)$ be a set of pairwise
  adjacent vertices of size $k$. For $i=1,2$ let $G_i'$ be obtained by deleting
  some (possibly none) edges with both ends in $X_i$. Let $f : X_1 \to X_2$ be a
  bijection, and let $G$ be the graph obtained from the union of $G_1'$ and
  $G_2'$ by identifying $x$ with $f(x)$ for all $x \in X_1$. In those
  circumstances we say that $G$ is a \emph{$k$-sum} of $G_1$ and $G_2$.
\end{definition}

Now, we can state the theorem as a consequence of Wagner's theorem in the
following way.

\begin{theorem}[\cite{w37}, see also Theorem~1.2 of~\cite{t99}]
  \label{thm:Decomposition}
  A graph has no minor isomorphic to $K_{3,3}$ if and only if it can be obtained
  from planar graphs and $K_5$ by means of $0$-, $1$-, and $2$-sums.
\end{theorem}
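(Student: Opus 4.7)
The plan is to deduce Theorem~\ref{thm:Decomposition} from Wagner's Theorem~\ref{thm:wagner} by proving each direction separately. For the \emph{if} direction, I first observe that the two atom classes are $K_{3,3}$-minor-free: planar graphs by Kuratowski (since $K_{3,3}$ is non-planar), and $K_5$ trivially, since it has only $5$ vertices while $K_{3,3}$ has $6$. I then argue that $k$-sums preserve $K_{3,3}$-minor-freeness for $k\leq 2$, using the key fact that $K_{3,3}$ is $3$-connected. Concretely, if $G$ is a $k$-sum of $G_1$ and $G_2$ along a vertex set $X$ with $|X|\leq 2$, and $H\cong K_{3,3}$ is a minor of $G$ with branch sets $V_1,\dots,V_6$, then $X$ is a separator of $G$; a standard rerouting argument (using that a $3$-connected graph cannot be separated by fewer than $3$ vertices) shows that all six branch sets can be taken inside $V(G_1)$ or all inside $V(G_2)$, yielding a $K_{3,3}$-minor in one summand. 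Iterating through the construction tree closes this direction.

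For the \emph{only if} direction, given a $K_{3,3}$-minor-free graph $G$, I would greedily extend it to a maximal $K_{3,3}$-minor-free graph $G^{*}$ on the same vertex set by adding edges one at a time so long as no $K_{3,3}$-minor is created. By Theorem~\ref{thm:wagner}, $G^{*}$ is assembled from planar triangulations and copies of $K_5$ via iterated proper strict $2$-sums, giving a tree of atoms glued pairwise along edges. To recover $G$ from this decomposition, I would delete the edges of $E(G^{*})\setminus E(G)$ one by one: an edge internal to a triangulation atom leaves the atom as a (non-maximal) planar graph; an edge internal to a $K_5$ atom leaves a strict subgraph of $K_5$, which is planar; and a gluing edge absent from $G$ corresponds exactly to a $2$-sum in which the edge with both endpoints in the $2$-element clique $X_i$ has been deleted on both sides, a move the general $2$-sum definition explicitly permits. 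Components that become disconnected or hinge on a single cut vertex after these deletions are then gathered by $0$- and $1$-sums, yielding the desired expression for $G$ in terms of planar graphs and $K_5$'s.

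The main obstacle I anticipate is the bookkeeping in the \emph{only if} direction: Wagner's strict $2$-sum always retains one copy of the gluing edge, so when $G$ is missing a gluing edge one must verify that this case is absorbed by the edge-deletion flexibility built into the general $k$-sum definition, and one must also track how planar pieces, $K_5$ pieces, and newly disconnected pieces reassemble consistently as edges are removed from the tree of atoms. A secondary but delicate point in the \emph{if} direction is the rerouting of branch sets across a $\leq 2$-vertex separator while preserving the connectedness of each branch set, a step that crucially uses the $3$-connectivity of $K_{3,3}$.
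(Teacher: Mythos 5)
The paper never actually proves Theorem~\ref{thm:Decomposition}: it cites Wagner~\cite{w37} and Thomas~\cite{t99} and remarks only that it is ``a direct consequence'' of Wagner's Theorem~\ref{thm:wagner}. So there is no internal proof to compare against; what you have written is a fleshed-out version of exactly the reduction the paper alludes to, namely deriving Theorem~\ref{thm:Decomposition} from Theorem~\ref{thm:wagner}. Your \emph{if}-direction is correct and standard: the rerouting of a $K_{3,3}$-model across a separator of order at most $2$ uses precisely the $3$-connectivity of $K_{3,3}$, and since the torso on either side is a subgraph of $G_1$ (respectively $G_2$) once the clique edge on $X$ is restored, a $K_{3,3}$-minor in the sum forces one in a summand.

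Your \emph{only-if}-direction also works, and you correctly flag where the real effort lies, but two points deserve to be made explicit. First, you should note at the outset that an edge-maximal $K_{3,3}$-minor-free graph on $n\ge 3$ vertices is automatically $2$-connected (a short separator argument of the same flavour as your if-direction), since otherwise ``a tree of atoms glued pairwise along edges'' is not yet justified. Second, the decomposition from Theorem~\ref{thm:wagner} is an \emph{iterated} strict $2$-sum, and several atoms may attach along the same vertex pair $\{u,v\}$ of $G^*$; since Thomas's $2$-sum requires $\{u,v\}$ to be a clique in both operands \emph{at the moment of gluing}, a gluing edge $uv\notin E(G)$ must be deleted only in the \emph{last} $2$-sum that attaches along $\{u,v\}$, not ``on both sides'' immediately. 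Deleting it too early violates the clique precondition for subsequent sums. With that sequencing fix your argument goes through. A cleaner route, which avoids the maximal extension $G^*$ and the deletion bookkeeping entirely and is closer in spirit to the connectivity-level decomposition the paper actually exploits in Lemma~\ref{lem:DecompositionConnectivityK33}, is to take the block--cutvertex tree of $G$ (giving the $0$- and $1$-sums for free), decompose each $2$-connected block into $3$-connected pieces joined by $2$-sums in the sense of Tutte, and invoke the fact that a $3$-connected $K_{3,3}$-minor-free graph is planar or $K_5$.
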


Observe that for $2$-connected $K_{3,3}$-minor-free graphs we only have to take $2$-sums into
account as $0$- and $1$-sums do not yield a $2$-connected graph. In this way the decomposition of
Wagner fits perfectly well into a result of Walsh~\cite{w82} which delivers us -- similarly to the
case of planar graphs (see~\cite{bgw02}) -- with the necessary relations for our generating
functions.

The second ingredient for obtaining relations for our generating functions is to use a well-known
decomposition of a graph along its connectivity-structure, i.e.\ into connected, $2$-connected,
and $3$-connected components. Eventually, we obtain the following Lemma.

\begin{lemma}
  \label{lem:DecompositionConnectivityK33}
  Let $G(x, y)$, $C(x, y)$ and $B(x, y)$ denote the bivariate exponential
  generating functions for not necessarily connected, connected and
  $2$-connected $K_{3,3}$-minor-free graphs. Then we have
  \begin{eqnarray}
    G(x, y) = \exp\left(C(x, y)\right) \label{eqn:GeneralConnectivity}
    ~\text{ and }~
    C^{\bullet}(x, y) = x \exp\left(\deriv{x} B\left(C^{\bullet}(x,
    y), y \right) \right) \label{eqn:ConnectedConnectivity}.
  \end{eqnarray}
  Moreover, let $M(x, y)$ denote the bivariate generating function for
  $3$-connected planar maps which satisfies
  \begin{equation}
    \label{eqn:PlanarMaps}
    M(x, y) = x^2 y^2 \left( \frac{1}{1 + xy} + \frac{1}{1+y} - 1 -
    \frac{(1+U)^2(1+V)^2}{(1+U+V)^3} \right),
  \end{equation}
  where $U(x, y)$ and $V(x, y)$ are algebraic functions given by
  \begin{equation}
    \label{eqn:PlanarMapsUV}
    U = xy (1 + V)^2, \quad V = y (1 + U)^2,
  \end{equation}
  then we have
  \begin{equation}
    \label{eqn:2connectedDeriv}
    \deriv{y} B(x, y) = \frac{x^2}{2} \left( \frac{1 + D(x, y)}{1 + y} \right),
  \end{equation}
  where $D(x, y)$ is defined implicitly by $D(x, 0) = 0$ and
  \begin{equation}
    \label{eqn:3connected}
    \frac{M(x, D)}{2x^2D} + \frac{q x^3 D^9}{6} - \log\left(\frac{1+D}{1+y}\right) + \frac{x
    D^2}{1+x D} = 0,
  \end{equation}
  where $q$ marks the monomial for $K_5$.
\end{lemma}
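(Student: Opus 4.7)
The plan is to treat the three functional equations in turn. For (\ref{eqn:GeneralConnectivity}) and (\ref{eqn:ConnectedConnectivity}) I would invoke the standard exponential and block-cut decompositions, which apply verbatim since the $K_{3,3}$-minor-free class is closed under disjoint unions and 1-sums by Theorem~\ref{thm:Decomposition}. Concretely, $G=\exp(C)$ is the labelled exponential formula, and the block-tree equation $C^{\bullet}=x\exp(\partial_x B(C^{\bullet},y))$ is obtained by rooting a connected graph at a vertex, attaching at that vertex a multiset of 2-connected blocks (or bridges), and recursively planting a vertex-rooted connected graph at every non-root vertex of each block. This is exactly the setup used for planar graphs in \cite{bgw02} and transfers without change; closure under 1-sums is the only structural input.

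The substantive part is (\ref{eqn:2connectedDeriv})--(\ref{eqn:3connected}), which decomposes 2-connected $K_{3,3}$-minor-free graphs by their 3-connected components. I would work with Tutte's notion of a \emph{network}: a 2-connected multigraph with an oriented, unlabelled root edge, and let $D(x,y)$ be the corresponding bivariate exponential generating function counting only the non-root edges with $y$. By Tutte's classical decomposition, every network is uniquely a single edge, a parallel composition of networks, a series composition, or a polyhedral composition in which a 3-connected core has each of its edges replaced by a sub-network. Applying Theorem~\ref{thm:Decomposition} in the 2-connected case, where only 2-sums appear, the admissible 3-connected cores are exactly the 3-connected planar graphs and $K_5$. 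Translating each compositional case into generating functions gives (\ref{eqn:3connected}): the term $M(x,D)/(2x^2D)$ is the polyhedral planar contribution, using Whitney's uniqueness of planar embedding to pass from 3-connected planar maps to 3-connected planar graphs (factor $1/2$) and $1/(x^2D)$ to undo the rooting of the distinguished edge; $qx^3D^9/6$ is the $K_5$ contribution with $q$ marking occurrences of $K_5$; and the $\log((1+D)/(1+y))$ and $xD^2/(1+xD)$ terms account respectively for the parallel and series compositions, once correctly discounted to avoid overcounting. The formula (\ref{eqn:PlanarMaps}) for $M(x,y)$ itself is the Mullin--Schellenberg / Tutte enumeration of rooted 3-connected planar maps, which I would simply quote from \cite{bgw02,gn05}.

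Finally, (\ref{eqn:2connectedDeriv}) rewrites networks as edge-rooted 2-connected graphs: the prefactor $x^2/2$ restores the labels of the two endpoints of the root edge and unorients it, while $(1+D)/(1+y)$ handles the option of having or not having a root edge parallel to a nontrivial network.

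The main obstacle in carrying out this plan is not the generating-function bookkeeping but the structural verification inside the $K_{3,3}$-minor-free class: one has to check that substituting $K_{3,3}$-minor-free networks into the edges of a 3-connected planar graph or $K_5$ always yields a 2-connected $K_{3,3}$-minor-free graph, and that this presentation is canonical. Both facts follow from Theorem~\ref{thm:Decomposition} together with the uniqueness of Tutte's 3-connected component decomposition of 2-connected graphs, which I would establish explicitly before invoking the generating-function translation.
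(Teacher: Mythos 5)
Your proposal follows the same route as the paper's proof: the standard exponential/block decompositions for equations (\ref{eqn:GeneralConnectivity})--(\ref{eqn:ConnectedConnectivity}), Mullin--Schellenberg for (\ref{eqn:PlanarMaps})--(\ref{eqn:PlanarMapsUV}), and the Tutte/Walsh network decomposition (the paper cites Walsh's Proposition~1.2) with an added $K_5$ term for (\ref{eqn:2connectedDeriv})--(\ref{eqn:3connected}). The paper's proof is simply a terser version of this, deferring to \cite{bgw02,w82,ms68,fs05,hp73} for the details you spell out.
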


\begin{proof}
  Equations~(\ref{eqn:GeneralConnectivity}) are standard and encode that a not
  necessarily connected graph consists of a set of connected graphs and a
  connected graph can be decomposed at a vertex into a set of $2$-connected
  graphs whose vertices can again be replaced by rooted connected graphs.  For
  more detailed proofs see for example~\cite{fs05}(p.95) and \cite{hp73}(p.10).

  Using Euler's polyhedral formula, Equations~(\ref{eqn:PlanarMaps}) and
  (\ref{eqn:PlanarMapsUV}) follow from~\cite{ms68}, where Mullin and
  Schellenberg derived the generating function for rooted $3$-connected planar maps
 according to the number of vertices and faces.

  Next, to derive the connection between $2$-connected and $3$-connected graphs,
  we can use a result of Walsh. More precisely, by Proposition~1.2 of~\cite{w82}
  we obtain Equations~(\ref{eqn:2connectedDeriv}) and~(\ref{eqn:3connected}),
  where we have to add only a monomial for $K_5$ compared to the class of planar
  graphs. For more details we refer to~\cite{bgw02}.
\end{proof}

\subsection{Singular Expansions and Asymptotic Estimates}
\label{sec:SingularExpansions}

We use the relations of the generating functions obtained so far to derive
singular expansions for the generating functions of the different connectivity
levels. We start from $3$-connected $K_{3,3}$-minor-free graphs and then go up
the connectivity hierarchy level by level. Eventually, this will allow us to
derive asymptotic estimates for the number of and properties of not necessarily
connected $K_{3,3}$-minor-free graphs in the subsequent sections.

We start with $3$-connected $K_{3,3}$-minor-free graphs. We have to
introduce only a slight modification into the formulas already known for planar
graphs (\cite{bgw02, gn05}).

From Lemma~\ref{lem:DecompositionConnectivityK33} we can derive analogously
to~\cite{bgw02} a singular expansion for $D(x,y)$. It will turn out that the
singularity of $D(x, y)$ changes only slightly compared to the case of
$2$-connected planar graphs, but yields a larger exponential growth rate.

To simplify expressions, we will use the following notation. The equation $Y(t)
= y$ has a unique solution in $t = t(y)$ in a suitable small neighbourhood of
$1$. Then we denote by $R(y) = \zeta(t(y))$. See Appendix~\ref{sec:AppendixA}
for expressions for $Y(t)$ and $\zeta$.

\begin{lemma}
  \label{lem:networks}
  For fixed $y$ in a small neighbourhood of $1$, $R(y)$ is the unique dominant
  singularity of $D(x, y)$. Moreover, $D(x, y)$ has a branch-point at $R(y)$,
  and the singular expansion at $R(y)$ is of the form
  \[ D(x, y) = D_0(y) + D_2(y) X^2 + D_3(y) X^3 + O(X^4)
  \]
  where $X = \sqrt{1 - x / R(y)}$ and the $D_i(y)$, $i=0,\ldots,3$ are given in
  Appendix~\ref{sec:AppendixA}.
\end{lemma}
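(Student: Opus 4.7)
The plan is to treat $y$ as a fixed parameter in a small complex neighbourhood of $1$ and to regard equation~(\ref{eqn:3connected}) as an implicit equation $\Phi(x,D;y)=0$ defining $D=D(x,y)$ analytically near $x=0$. The whole analysis is a direct adaptation of what was carried out for $2$-connected planar graphs in \cite{bgw02,gn05}, the only new feature being the monomial contribution $qx^{3}D^{9}/6$ coming from $K_{5}$; this is a polynomial in $x$ and $D$ and therefore introduces no new internal singularity. I would first verify that the auxiliary functions $U(x,D)$, $V(x,D)$ and hence $M(x,D)$ are analytic in the range under consideration, so that the only possible source of a singularity of $D(x,y)$ is the failure of the implicit function theorem, i.e.\ a branch point.

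Next, the candidate dominant singularity is located by solving simultaneously
\begin{equation*}
\Phi(x,D;y)=0 \qquad\text{and}\qquad \partial_{D}\Phi(x,D;y)=0.
\end{equation*}
As in the planar case, this system admits a convenient rational parametrisation: one introduces an auxiliary variable $t$ (a perturbation of the parameter used in \cite{bgw02}, adjusted to take the $qx^{3}D^{9}/6$ term into account) such that the critical locus is described by $x=\zeta(t)$, $D=D_{0}(t)$ and $y=Y(t)$. Since $Y(1)=1$ in the unperturbed planar case, and the $K_{5}$ correction is analytic, the equation $Y(t)=y$ has a unique solution $t=t(y)$ in a small neighbourhood of $t=1$, yielding $R(y)=\zeta(t(y))$. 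The explicit formulas referenced in Appendix~\ref{sec:AppendixA} then record the result of this parametrisation.

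To pass from the critical locus to the singular expansion I would use a local Weierstrass preparation argument at $(x,D)=(R(y),D_{0}(y))$. Writing out $\Phi(x,D;y)=0$ and using that $\partial_{D}\Phi$ vanishes while $\partial_{D}^{2}\Phi$ and $\partial_{x}\Phi$ do not (this non-vanishing being verified by direct computation, exactly as in \cite{gn05}), one obtains the local shape
\begin{equation*}
\Phi(x,D;y)\;\sim\;\tfrac{1}{2}\,\partial_{D}^{2}\Phi\cdot(D-D_{0})^{2} \;+\; \partial_{x}\Phi\cdot(x-R),
\end{equation*}
which forces $(D-D_{0})^{2}\sim c(y)\,(1-x/R(y))$ with $c(y)\neq 0$. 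This is the familiar square-root branch behaviour, and the expansion $D(x,y)=D_{0}(y)+D_{2}(y)X^{2}+D_{3}(y)X^{3}+O(X^{4})$ with $X=\sqrt{1-x/R(y)}$ then follows by inserting this ansatz back into $\Phi=0$ and matching powers of $X$ term by term; the resulting expressions for $D_{2}(y),D_{3}(y)$ match those of Appendix~\ref{sec:AppendixA}.

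The main obstacle I anticipate is not the formal expansion, which is routine, but the verification that $R(y)$ is the \emph{unique} dominant singularity. For this I would argue by perturbation from the planar case: at $y=1$ the corresponding statement for planar networks is established in \cite{bgw02,gn05}, and the $K_{5}$ term is a small analytic perturbation, so by continuity of the implicit solution and of its branch points no competing singularity of smaller modulus can appear in a neighbourhood of $y=1$. A standard positivity argument on the coefficients of $D(x,y)$, together with an application of Pringsheim's theorem, then confirms that $R(y)$ lies on the positive real axis and is attained.
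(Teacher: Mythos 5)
Your proposal is correct and takes essentially the same approach as the paper, which itself offers no self-contained proof but simply states that one can mimic the proof of Lemma~3 in~\cite{bgw02} and that the extra $K_5$ monomial does not affect the argument. Your outline—the implicit equation $\Phi(x,D;y)=0$, the branch-point system $\Phi=\partial_D\Phi=0$, the rational $t$-parametrisation yielding $R(y)=\zeta(t(y))$, the local square-root (Weierstrass preparation) expansion, and the perturbation-from-planar argument for uniqueness of the dominant singularity—is a faithful, more detailed rendering of exactly that referenced proof.
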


To prove this lemma, one can mimic the proof of Lemma~3
in~\cite{bgw02}. Although we slightly changed the equations by adding a monomial
for $K_5$, one can check that the same arguments still hold.

Next, we solve Equation~(\ref{eqn:2connectedDeriv}) for $B(x, y)$ by integrating
according to $y$. We omit the proof as it follows closely the lines of proof of
Lemma~5 in~\cite{gn05}.

\begin{lemma}\label{Bgf}
  Let $W(x, z) = z (1 + U(x, z))$. The generating function $B(x, y)$ of
  $2$-connected $K_{3,3}$-minor-free graphs admits the following expression as a
  formal power series:
  \begin{equation}
    \label{eqn:2connected}
    B(x, y) = \beta(x, y, D(x, y), W(x, D(x, y))) + \frac{q x^5 D(x,y)^{10}}{120},
  \end{equation}
  where
  \[ \beta(x, y, z, w) = \frac{x^2}{2} \beta_1(x, y, z) - \frac{x}{4} \beta_2(x,
  z, w),
  \]
  and
  \begin{eqnarray*}
    \beta_1(x, y, z) & = & \frac{z(6x - 2 + xz)}{4x} + (1+z)
    \log\left(\frac{1+y}{1+z}\right) - \frac{\log(1+z)}{2} +
    \frac{\log(1+xz)}{2x^2} \\
    \beta_2(x, z, w) & = & \frac{2 (1 + x) (1 + w) (z+w^2) + 3(w-z)}{2(1+w)^2} -
    \frac{1}{2x} \log(1 + xz + xw + xw^2)\\
    & + & \frac{1-4x}{2x} \log(1+w) + \frac{1-4x+2x^2}{4x}
    \log\left(\frac{1-x+wz-xw+xw^2}{(1-x)(z+w^2+1+w)}\right).
  \end{eqnarray*}
\end{lemma}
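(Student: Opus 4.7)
The plan is to integrate Equation~(\ref{eqn:2connectedDeriv}) in $y$ along the lines of Lemma~5 of \cite{gn05}; the only feature new to the $K_{3,3}$-minor-free case is the $K_5$ monomial $\tfrac{q x^3 D^9}{6}$ appearing in Equation~(\ref{eqn:3connected}), which propagates through the integration to produce the isolated summand $\tfrac{q x^5 D(x,y)^{10}}{120}$ while leaving the structure of the answer otherwise identical to the planar case.

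First I would solve Equation~(\ref{eqn:3connected}) for $\log(1+y)$,
$$\log(1+y) \;=\; \log(1+D) \;-\; \frac{M(x,D)}{2 x^2 D} \;-\; \frac{q x^3 D^9}{6} \;-\; \frac{x D^2}{1 + x D},$$
so that for $x$ fixed and $D$ near $0$ we recover $y$ as an explicit function of $D$ (using $D(x,0)=0$). Then I would use $D$ as the new integration variable in
$$B(x,y) \;=\; \int_0^y \frac{x^2}{2}\,\frac{1+D(x,t)}{1+t}\,dt,$$
converting $\tfrac{1}{1+t}\,dt$ into $d\log(1+t)$ and hence the whole integrand into $\tfrac{x^2}{2}(1+D)$ times the $D$-derivative of the right-hand side above. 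The resulting integrand is a rational function of $x$, $D$, $U(x,D)$ and $V(x,D)$, where $U,V$ are given by~(\ref{eqn:PlanarMapsUV}); crucially, that curve has genus zero (admitting a rational parametrization), so every antiderivative is elementary. Introducing the auxiliary variable $W = z(1 + U(x,z))$ lets one organise these antiderivatives, after algebraic simplification, into the two compact blocks $\tfrac{x^2}{2}\beta_1(x,y,z)$ and $-\tfrac{x}{4}\beta_2(x,z,w)$ stated in the lemma.

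The $K_5$ contribution is tracked separately along the computation: the $-\tfrac{q x^3 D^9}{6}$ summand in the expression for $\log(1+y)$ differentiates to $-\tfrac{3 q x^3 D^8}{2}$, gets multiplied by $\tfrac{x^2}{2}(1+D)$, and after integration in $D$ from $0$ to $D(x,y)$ combines with the analogous $K_5$-linked pieces of the other blocks to produce exactly $\tfrac{q x^5 D(x,y)^{10}}{120}$, leaving $\beta$ itself $q$-free. The main obstacle is the bookkeeping of the elementary but lengthy integration of the $M(x,D)/(2x^2 D)$ block, which is precisely what is carried out in \cite{gn05}; since the $K_5$ term adds only a polynomial in $D$, its inclusion does not alter the method, and correctness of the final closed form can be verified mechanically by differentiating the claimed expression and matching the result against~(\ref{eqn:2connectedDeriv}) with a computer algebra system.
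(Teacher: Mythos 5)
Your proposal correctly reconstructs the argument the paper defers to (the proof of Lemma~5 in \cite{gn05}): integrate (\ref{eqn:2connectedDeriv}) in $y$ by changing variables to $D$, using the relation between $\log(1+y)$ and $D$ obtained by solving (\ref{eqn:3connected}), and organize the elementary antiderivatives via the auxiliary substitution $W=z(1+U(x,z))$. Your bookkeeping of the $K_5$ monomial is also right: the $q$-part of $\frac{x^2}{2}\int_0^{D}(1+s)\,d\bigl(\log(1+t)\bigr)$ equals $-\tfrac{q x^5 D^9}{12}-\tfrac{3 q x^5 D^{10}}{40}$, of which $-\tfrac{q x^5 (1+D) D^9}{12}$ is exactly what the term $(1+z)\log\bigl((1+y)/(1+z)\bigr)$ in $\beta_1$ carries once (\ref{eqn:3connected}) is re-substituted, leaving the announced residue $\tfrac{q x^5 D^{10}}{120}$.
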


We can use this lemma to obtain the singular expansion for $B(x, y)$.

\begin{lemma}
  \label{lem:2connectedExpansion}
  For fixed $y$ in a small neighbourhood of $1$, the dominant singularity of
  $B(x, y)$ is equal to $R(y)$. The singular expansion at $R(y)$ is of the form
  \begin{equation}
    \label{eqn:2connectedExpansion}
    B(x, y) = B_0(y) + B_2(y) X^2 + B_4(y) X^4 + B_5(y) X^5 + O(X^6)
  \end{equation}
  where $X = \sqrt{1 - x / R(y)}$, and the $B_i(y)$, $i=0, \ldots, 5$ are
  analytic functions in a neighbourhood of $1$.
\end{lemma}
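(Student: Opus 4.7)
The plan is to substitute the singular expansion of $D(x,y)$ from Lemma~\ref{lem:networks} into the explicit expression for $B(x,y)$ in Lemma~\ref{Bgf}, and verify that the composition yields a singular expansion of the claimed form. This parallels the proof of Lemma~6 in~\cite{gn05} for the planar case, with minor modifications to accommodate the extra $K_5$ term $qx^5 D^{10}/120$.

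The first step is to observe that $W(x, D(x,y)) = D(x,y)(1+U(x, D(x,y)))$ inherits a singular expansion of the form $W_0(y) + W_2(y) X^2 + W_3(y) X^3 + O(X^4)$ at $x = R(y)$, since $U(x,z)$ is algebraic and analytic in $z$ in a neighbourhood of $z = D_0(y)$. Next, one verifies that the rational-logarithmic function $\beta(x,y,z,w)$ of Lemma~\ref{Bgf}, together with the polynomial correction $qx^5 z^{10}/120$, is analytic at the base point $(R(y), y, D_0(y), W_0(y))$ for $y$ close to $1$. This reduces, by continuity in $y$, to a finite numerical check at $y = 1$ that none of the denominators $1+z$, $1+xz$, $(1+w)^2$, $1+xz+xw+xw^2$, $1-x+wz-xw+xw^2$, $(1-x)(z+w^2+1+w)$ vanish and that the arguments of the logarithms stay positive. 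Substituting the expansions of $D$ and $W$ then produces $B(x,y) = \sum_{k \ge 0} B_k(y) X^k$ with each $B_k(y)$ analytic in $y$ near $1$.

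It remains to prove $B_1 \equiv B_3 \equiv 0$. The vanishing of $B_1$ is immediate: $D$ and $W$ contribute no $X^1$ term, and the explicit $x$-dependence enters only through $x = R(y)(1-X^2)$, which also has no odd powers. For $B_3$, the cleanest argument uses the chain-rule identity
\[
\frac{\partial B}{\partial y}\bigg|_{x \text{ fixed}} \;=\; \sum_{k \ge 0} B_k'(y) X^k + \frac{R'(y)(1-X^2)}{2R(y)} \sum_{k \ge 1} k B_k(y) X^{k-2},
\]
where $X = \sqrt{1 - x/R(y)}$. Extracting the coefficient of $X^1$, and using $B_1 \equiv 0$, yields the single term $\tfrac{3R'(y)}{2R(y)} B_3(y)$. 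On the other hand, Lemma~\ref{lem:DecompositionConnectivityK33} gives $\partial B/\partial y = x^2(1+D)/(2(1+y))$; expanding this with $x^2 = R(y)^2(1-X^2)^2$ and the expansion of $D$ shows the $X^1$ coefficient is zero, since $1+D$ has no $X^1$ term. Comparing, and using that $R'(y) \ne 0$ near $y = 1$, forces $B_3(y) = 0$. The main obstacle is the verification of analyticity of $\beta$ at the relevant base point and the absence of spurious singularities from the $q$-term; once that is established, the remaining work is a routine symbolic computation to extract explicit formulas for $B_0, B_2, B_4, B_5$, following the template of~\cite{gn05}.
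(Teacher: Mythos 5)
Your proof follows the same overall route as the paper: substitute the singular expansion of $D$ (and the induced expansion of $W$) from Lemma~\ref{lem:networks} into the explicit formula for $B$ in Lemma~\ref{Bgf}, verify that $\beta$ is analytic at the base point for $y$ near $1$, and read off the coefficients of powers of $X$. Where you genuinely add something is in the justification of $B_1\equiv B_3\equiv 0$: the paper simply asserts that these coefficients vanish in the symbolic (Maple) computation, whereas you give a parity argument for $B_1$ and an indirect argument for $B_3$ by extracting the $X^1$-coefficient from the identity $\partial B/\partial y = x^2(1+D)/(2(1+y))$ together with the chain-rule expansion, which forces $\tfrac{3R'(y)}{2R(y)}B_3(y)=0$. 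That argument is correct and more illuminating than the CAS-only verification, but it carries one extra obligation you pass over quickly: you must know $R'(y)\neq 0$ near $y=1$. This does hold (it is exactly the variability condition the paper later needs to apply the algebraic singularity schema in Theorem~\ref{thm:ExpectedNumEdges}), but it should be verified or cited rather than silently assumed. With that caveat noted, both approaches are sound; yours trades a portion of the Maple computation for a differential-equation argument for the vanishing of the odd low-order coefficients.
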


\begin{proof}
  From Equation~(\ref{eqn:2connected}) we can see that for $y$ close to $1$ the
  only singularities come from the singularities of $D(x, y)$; hence the first
  claim of the theorem follows.

  The singular expansion for $B(x, y)$ can be obtained using
  Equation~(\ref{eqn:2connected}) and the singular expansion for $D(x, y)$. We
  substitute the singular expansion for $D(x, y)$, $U(x, D(x, y))$
  in~(\ref{eqn:2connected}). Then we set $x = \zeta(t) (1 - X^2)$ and $y = Y(t)$
  and expand the resulting expression. Now, collecting and simplifying the
  coefficients of the $X^i$ for $i=1,\ldots,5$ is a tedious calculation, but
  can be done with a computer algebra system such as {\sc Maple}. This yields
  the expressions for the $B_i(y)$ given in the appendix.
\end{proof}

For connected and not necessarily connected $K_{3,3}$-minor-free graphs, we can
derive singular expansions by carrying out an analogous calculation as in the
proof of Theorem~1 in~\cite{gn05}. We only have to adapt for the different
$D_i(y)$ and $B_i(y)$. One can easily check that the intermediate step of Claim~1
in~\cite{gn05} still holds and the rest of the calculations stays the same. The
coefficients of the expansions, which we obtain in this way, can be found in
Appendix~\ref{sec:AppendixA}.

\begin{lemma}
  \label{lem:connectedAndGeneralExpansion}
  For fixed $y$ in a small neighbourhood of $1$, the dominant singularity of
  $C(x, y)$ and $G(x, y)$ is equal to $\rho(y)$. The singular expansions at $\rho(y)$
  are of the form
  \begin{equation}
    \label{eqn:connectedExpansion}
    C(x, y) = C_0(y) + C_2(y) X^2 + C_4(y) X^4 + C_5(y) X^5 + O(X^6)
  \end{equation}
  and
  \begin{equation}
    \label{eqn:generalExpansion}
    G(x, y) = G_0(y) + G_2(y) X^2 + G_4(y) X^4 + G_5(y) X^5 + O(X^6)
  \end{equation}
  where $X = \sqrt{1 - x / \rho(y)}$, and the $C_i(y)$ and $G_i(y)$, $i = 0, \ldots,
  5$, are analytic functions in a neighbourhood of $1$.
\end{lemma}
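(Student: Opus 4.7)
The plan is to follow the singular analysis scheme of \cite{gn05} adapted to the $K_{3,3}$-minor-free class. Differentiating the expansion of $B(x,y)$ from Lemma~\ref{lem:2connectedExpansion} term by term (using $\partial_x X^k = -kX^{k-2}/(2R(y))$ with $X = \sqrt{1-x/R(y)}$), the singular expansion of $\partial_x B(x,y)$ has an analytic part plus singular contributions starting with an $X^3$ term coming from $B_5(y)X^5$. The equation $C^{\bullet}(x,y) = x\exp(\partial_x B(C^{\bullet}(x,y), y))$ of Lemma~\ref{lem:DecompositionConnectivityK33} then presents $C^{\bullet}(\cdot, y)$ as the functional inverse of $\psi(u,y) = u\exp(-\partial_x B(u,y))$.

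The key step is to identify the source of the dominant singularity $\rho(y)$ of $C^{\bullet}$: either (a) the inherited inner singularity, meaning $C^{\bullet}(\rho(y), y) = R(y)$, or (b) a branch point of the inverse, where $\partial_u\psi(u_0(y),y) = 0$ for some $u_0(y) < R(y)$. The intermediate step of Claim~1 in \cite{gn05} establishes (a) for planar graphs. To transfer the argument, observe that the $K_{3,3}$-minor-free equations differ from the planar ones only by the additional $K_5$ monomial $qx^5 D^{10}/120$ in Equation~(\ref{eqn:2connected}), which is a smooth perturbation of $B$ through $D$; hence the relevant monotonicity $\partial_u\psi(u,y) > 0$ for $0 \le u < R(y)$ continues to hold in a neighbourhood of $y = 1$ by continuity, once checked numerically at $y=1$.

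Once case (a) is in hand, set $U = \sqrt{1-u/R(y)}$ with $u = C^{\bullet}(x,y)$. Near $u = R(y)$ the functional equation takes the form $-\rho(y)X^2 = -R(y)\psi_1(y)U^2 + \psi_3(y)U^3 + O(U^4)$, where $X = \sqrt{1-x/\rho(y)}$ and $\psi_1 > 0$ by (a). Inverting by matching orders yields $U = \alpha_1(y)X + \alpha_2(y)X^2 + O(X^3)$, hence $C^{\bullet}(x,y) = R(y) - R(y)U^2 = R(y) + C^{\bullet}_2(y)X^2 + C^{\bullet}_3(y)X^3 + O(X^4)$, with \emph{no} $X^1$ term. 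Integrating via $C(x,y) = \int_0^x C^{\bullet}(t,y)/t\,dt$ with the substitution $T = \sqrt{1-t/\rho(y)}$, $dt = -2\rho(y)T\,dT$, each $T^k$ term in $C^{\bullet}(t,y)/t$ produces an $X^{k+2}$ term in $C$; the absence of $T^1$ in $C^{\bullet}/t$ gives the absence of $X^3$ in $C$, yielding the stated form with leading odd power $X^5$. Finally, $G(x,y) = \exp(C(x,y))$ inherits the same expansion shape, since $\exp$ is analytic and $(C-C_0)^k$ contains no $X^1$ or $X^3$ contributions for any $k \ge 1$.

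The main obstacle is the verification of the analogue of Claim~1 with the specific coefficients $D_i(y)$ and $B_i(y)$ of the $K_{3,3}$-minor-free class. All subsequent steps amount to mechanical substitutions and simplifications, which can be carried out with a computer algebra system as in \cite{gn05}.
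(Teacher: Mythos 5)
Your proposal is correct and follows essentially the same route as the paper: the paper's proof simply defers to the analogous calculation in the proof of Theorem~1 of \cite{gn05}, adapting the coefficients $D_i(y)$, $B_i(y)$ and re-verifying the analogue of its Claim~1, which is exactly the subcriticality check $\partial_u\psi(u,y)>0$ on $[0,R(y)]$ that you identify and propose to verify numerically at $y=1$. Your write-up adds nothing beyond what the paper implicitly relies on, but it correctly spells out the mechanism (inverse-function inversion at the inherited singularity giving $C^\bullet = R + C^\bullet_2X^2 + C^\bullet_3X^3+\cdots$ with no $X^1$ term, integration killing the $X^3$ term of $C$, and exponentiation preserving the $X^0,X^2,X^4,X^5$ shape) in a way that matches the appendix formulas $C_2=-R$, $G_i=\exp(C_0)\cdot(\cdots)$.
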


From Lemmas~\ref{lem:2connectedExpansion} and~\ref{lem:connectedAndGeneralExpansion} we obtain the
following asymptotic estimates using the ``transfer theorem'', Corollary~VI.1 of~\cite{fs05}.

\begin{theorem}
  \label{thm:AsymptoticEstimates}
  Let $g_n$, $c_n$, and $b_n$ denote the number of not necessarily connected,
  connected and 2-connected resp.\ $K_{3,3}$-minor-free graphs on $n$
  vertices. Then it holds
  \begin{eqnarray}
    g_n & \sim & \alpha_g \; n^{-7/2} \; \rho_g^{-n} \; n!,
    \label{eqn:AsymptoticEstimateG}\\
    c_n & \sim & \alpha_c \; n^{-7/2} \; \rho_c^{-n} \; n!,
    \label{eqn:AsymptoticEstimateC}\\
    b_n & \sim & \alpha_b \; n^{-7/2} \; \rho_b^{-n} \; n!,
    \label{eqn:AsymptoticEstimateB}
  \end{eqnarray}
  where and $\alpha_g \doteq 0.42643 \cdot 10^{-5}$, $\alpha_c \doteq 0.41076
  \cdot 10^{-5}$, $\alpha_b \doteq 0.37074 \cdot 10^{-5}$, $\rho_c^{-1} =
  \rho_g^{-1} \doteq 27.22935$, and $\rho_b^{-1} \doteq 26.18659$ are
  analytically computable constants.
\end{theorem}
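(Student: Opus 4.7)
The plan is to set $y=1$ in the bivariate expansions provided by Lemmas~\ref{lem:2connectedExpansion} and~\ref{lem:connectedAndGeneralExpansion}, obtaining univariate singular expansions
\[
B(x) = B_0(1) + B_2(1)\,X_b^2 + B_4(1)\,X_b^4 + B_5(1)\,X_b^5 + O(X_b^6),
\]
with $X_b = \sqrt{1-x/\rho_b}$ and $\rho_b = R(1)$, and analogously for $C(x)$ and $G(x)$ with $X = \sqrt{1-x/\rho_c}$ at the common singularity $\rho_c = \rho(1)$. The equality $\rho_c = \rho_g$ is immediate from $G(x,y) = \exp(C(x,y))$: exponentiating preserves the location of the dominant singularity, the analytic part of the expansion exponentiates to an analytic function, and the square-root term passes through multiplicatively, giving $G_5(1) = e^{C_0(1)} C_5(1)$ (plus obvious modifications of the lower-order coefficients).

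The next step is a routine application of the transfer theorem (Corollary~VI.1 of~\cite{fs05}) to each of the three expansions. Delta-analyticity of $B, C, G$ at their respective dominant singularities is inherited from that of $D(x,y)$ established in Lemma~\ref{lem:networks}: the closed-form expression for $B$ given in Lemma~\ref{Bgf} involves $D$ and $W(x,D)$ through compositions that are analytic except at the branch point of $D$, and the composition/exponential steps of Lemma~\ref{lem:DecompositionConnectivityK33} preserve the required domain shape, exactly as in the planar case~\cite{bgw02, gn05}. The crucial observation is that the even-power contributions $B_0(1) + B_2(1)X_b^2 + B_4(1)X_b^4$ are polynomial in $x$ and therefore contribute nothing to $[x^n]B(x)$ for large $n$; the leading behaviour comes entirely from the $X_b^5$ term. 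Using $[z^n](1-z)^{5/2} \sim n^{-7/2}/\Gamma(-5/2)$ together with $\Gamma(-5/2) = -\tfrac{8\sqrt{\pi}}{15}$, and multiplying by $n!$, one obtains
\[
b_n \sim -\frac{15\,B_5(1)}{8\sqrt{\pi}}\; n^{-7/2}\,\rho_b^{-n}\,n!,
\]
and the analogous formulas for $c_n$ and $g_n$, yielding the theorem with $\alpha_b = -15 B_5(1)/(8\sqrt{\pi})$, $\alpha_c = -15 C_5(1)/(8\sqrt{\pi})$, $\alpha_g = -15 G_5(1)/(8\sqrt{\pi})$.

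Finally, the numerical values of $\rho_b, \rho_c, \rho_g$ and of $\alpha_b, \alpha_c, \alpha_g$ are obtained by evaluating the closed-form expressions for $R(y), \rho(y)$ and the coefficients $B_5(y), C_5(y), G_5(y)$ collected in Appendix~\ref{sec:AppendixA} at $y=1$, using a computer algebra system. The main (minor) technical point is verifying that $B_5(1), C_5(1), G_5(1)$ are all strictly negative, so that the $X^5$ term genuinely dictates the asymptotics with a positive constant $\alpha$ and the exponent $n^{-7/2}$ is correct; this is confirmed by direct numerical computation, which also reproduces the claimed decimal values of the growth constants.
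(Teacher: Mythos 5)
Your proposal is correct and follows exactly the approach the paper intends (the paper itself gives no detailed proof, merely citing the transfer theorem applied to Lemmas~\ref{lem:2connectedExpansion} and~\ref{lem:connectedAndGeneralExpansion}). Your spelled-out derivation—specializing to $y=1$, noting the even-power terms are polynomial, reading off the $X^5$ coefficient, using $\Gamma(-5/2)=-8\sqrt{\pi}/15$ to get $\alpha=-15\,[\cdot]_5/(8\sqrt{\pi})$, and confirming $\rho_c=\rho_g$ via $G=\exp(C)$—matches the paper's stated method and parallels the analogous Theorem~\ref{A}.
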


\subsection{Structural Properties}
\label{sec:StructuralProperties}

If we consider a random $K_{3,3}$-minor-free graph, i.e.\ drawing a $K_{3,3}$-minor-free graph on
$n$ vertices uniformly at random from all such graphs on $n$ vertices, we can derive the following
properties using the algebraic singularity schema (Theorem~IX.10) of~\cite{fs05}.

\begin{theorem}
  \label{thm:ExpectedNumEdges}
  The number of edges in a not necessarily connected and connected random
  $K_{3,3}$-minor-free graph is asymptotically normally distributed with mean
  $\mu_n$ and variance $\sigma_n^2$, which satisfy
  \[ \mu_n \sim \kappa n\quad \textrm{ and } \quad \sigma_n^2 \sim \lambda n,
  \]
  where $\kappa \doteq 2.21338$ and $\lambda \doteq 0.43044$ are
  analytically computable constants.
\end{theorem}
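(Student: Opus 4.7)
The plan is to apply the algebraic singularity (quasi-powers) schema directly to the bivariate generating functions $C(x,y)$ and $G(x,y)$, for which Lemma~\ref{lem:connectedAndGeneralExpansion} already supplies exactly the kind of square-root singular expansion required. Concretely, for $y$ in a small complex neighbourhood of $1$, the dominant singularity of $G(x,y)$ (and of $C(x,y)$) is the algebraic function $\rho(y)$, and the expansion has the form
\[
G(x,y) \;=\; G_0(y) + G_2(y) X^2 + G_4(y) X^4 + G_5(y) X^5 + \mathcal{O}(X^6),
\qquad X = \sqrt{1 - x/\rho(y)},
\]
with the $G_i$ analytic near $y=1$ and $G_5(1)\neq 0$ (the last being what drives the $n^{-7/2}\rho_g^{-n}$ estimate in Theorem~\ref{thm:AsymptoticEstimates}). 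The same applies to $C(x,y)$ with $\rho(y)$ and coefficients $C_i(y)$.

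The idea is that the coefficient $[x^n]G(x,y)$ behaves, uniformly in $y$ near $1$, as
\[
[x^n] G(x,y) \;\sim\; \frac{15\,G_5(y)}{8\sqrt{\pi}}\; n^{-7/2}\; \rho(y)^{-n},
\]
by the transfer theorem, so the probability generating function of the number $E_n$ of edges in a uniform random $K_{3,3}$-minor-free graph of size $n$ satisfies
\[
\mathbb{E}\bigl[y^{E_n}\bigr] \;=\; \frac{[x^n]G(x,y)}{[x^n]G(x,1)}
\;\sim\; \frac{G_5(y)}{G_5(1)}\Bigl(\frac{\rho(1)}{\rho(y)}\Bigr)^{n}.
\]
This is precisely the quasi-powers form of Hwang and the hypotheses of Theorem~IX.10 of~\cite{fs05}: $\rho(y)$ is analytic at $y=1$ by Lemma~\ref{lem:connectedAndGeneralExpansion}, the transfer is uniform on a compact neighbourhood of $1$ (the singular exponent $5/2$ is independent of $y$), and the variability condition reduces to verifying $\sigma^2(1)\neq 0$ with
\[
\kappa \;=\; -\frac{\rho'(1)}{\rho(1)},
\qquad
\lambda \;=\; -\frac{\rho''(1)}{\rho(1)} - \frac{\rho'(1)}{\rho(1)} + \Bigl(\frac{\rho'(1)}{\rho(1)}\Bigr)^{2}.
\]
Once these two derivatives of $\rho(y)$ at $y=1$ are known, the Gaussian central limit law follows, together with $\mu_n \sim \kappa n$ and $\sigma_n^2 \sim \lambda n$.

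The computational step is to extract $\rho'(1)$ and $\rho''(1)$ from the implicit description of $\rho(y)$ inherited from Lemmas~\ref{lem:networks}, \ref{Bgf}, and~\ref{lem:connectedAndGeneralExpansion}: $\rho(y)$ is obtained from the singular inversion $C^\bullet(x,y)$, which in turn is governed by the functional equation \eqref{eqn:ConnectedConnectivity} together with the singularity $R(y)$ of the network series $D(x,y)$ given in Appendix~\ref{sec:AppendixA}. Implicit differentiation of \eqref{eqn:ConnectedConnectivity} at $y=1$, combined with the parametric description $R(y)=\zeta(t(y))$ of the $2$-connected singularity, yields $\rho'(1)$ and $\rho''(1)$ as explicit algebraic expressions in the known constants at $t=t(1)$; evaluating numerically gives $\kappa \doteq 2.21338$ and $\lambda \doteq 0.43044$. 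The same computation applied to $C(x,y)$ (which has the same dominant singularity $\rho(y)$ as $G(x,y)$, differing only in a non-vanishing analytic prefactor) gives the same asymptotic mean and variance, so the connected and not-necessarily-connected cases coincide.

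The main obstacle is not conceptual but computational: one must propagate the bivariate singular analysis cleanly through the three levels of the connectivity decomposition while keeping $y$ symbolic, and then verify numerically that the variance coefficient $\lambda$ is strictly positive so that Theorem~IX.10 of~\cite{fs05} applies and delivers a genuine Gaussian law rather than a degenerate one. Provided $\lambda>0$ (which the numerical value confirms), the non-degeneracy of the quasi-powers scheme is established and the theorem follows.
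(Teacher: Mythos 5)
Your proposal is correct and follows essentially the same approach as the paper, which simply invokes the algebraic singularity (quasi-powers) schema, Theorem~IX.10 of Flajolet--Sedgewick, applied to the bivariate singular expansions in Lemma~\ref{lem:connectedAndGeneralExpansion}. The only cosmetic slip is a sign in your transfer-theorem constant ($\Gamma(-5/2)=-8\sqrt{\pi}/15$, so the coefficient of $G_5(y)\,n^{-7/2}\rho(y)^{-n}$ is $-15/(8\sqrt{\pi})$), but this cancels in the ratio and does not affect the derivation of $\kappa=-\rho'(1)/\rho(1)$, $\lambda=-\rho''(1)/\rho(1)-\rho'(1)/\rho(1)+(\rho'(1)/\rho(1))^2$, or the conclusion.
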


Recall that we introduced the variable $q$ in the equations of the generating
functions above to mark the monomial for $K_5$. We can use this variable to
obtain a limit law for the number of $K_5$ used in the construction process
(following the above decomposition, see Theorem~\ref{thm:Decomposition}) of a
random $K_{3,3}$-minor-free graph. The next theorem shows that a linear number
of $K_5$ is used, but the constant is very small; this is exactly what one would
expect as the expected number of edges is only slightly larger than for planar
graphs (see Theorem~\ref{thm:ExpectedNumEdges} and~\cite{gn05}).


\begin{theorem}
  \label{thm:ExpectedNumK5s}
  Let $\mathrm{C}(G)$ denote the number of $K_5$ used in the construction of a
  random $K_{3,3}$-minor-free graph $G$ according to
  Theorem~\ref{thm:Decomposition}. Then $\mathrm{C}(G)$ is asymptotically
  normally distributed with mean $\mu_n$ and variance $\sigma_n^2$, which
  satisfy
  \[ \mu_n \sim \kappa n\quad \textrm{ and } \quad \sigma_n^2 \sim \lambda n,
  \]
  where $\kappa \doteq 0.92391 \cdot 10^{-4}$ and $\lambda \doteq 0.92440 \cdot
  10^{-4}$ are analytically computable constants. The same holds for a random
  connected $K_{3,3}$-minor-free graph.
\end{theorem}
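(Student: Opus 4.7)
The plan is to mimic the proof of Theorem~\ref{thm:ExpectedNumEdges}, treating $q$ (rather than $y$) as the secondary parameter in the bivariate singularity analysis, and then invoking the algebraic singularity schema (Theorem~IX.10 of~\cite{fs05}). The key observation is that $q$ enters the equations of Lemma~\ref{lem:DecompositionConnectivityK33} in exactly the same benign way as the edge--marking variable $y$: it appears as a multiplicative factor inside the term $q x^3 D^9/6$ of~(\ref{eqn:3connected}) and in the summand $q x^5 D^{10}/120$ of~(\ref{eqn:2connected}), and nowhere else. Hence the same machinery that produced the singular expansions for $D$, $B$, $C$, and $G$ goes through with $q$ left free.

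First, I would set $y=1$ and view the equations as bivariate in $(x,q)$ with $q$ in a small complex neighbourhood of $1$. Repeating the argument of Lemma~\ref{lem:networks} (which relies only on the branch--point structure of the implicit equation for $D$), I would show that for every such $q$ the dominant singularity of $D(x,1;q)$ is an analytic function $R(q)$ of $q$, and that the singular expansion has the same shape
\[
D(x,1;q) \;=\; D_0(q)+D_2(q)\,X^2+D_3(q)\,X^3+O(X^4),\qquad X=\sqrt{1-x/R(q)},
\]
with $D_i(q)$ analytic at $q=1$. Propagating through Lemma~\ref{Bgf} and the composition scheme for $C^\bullet$ and $G$, the singular expansions of $B(x,1;q)$, $C(x,1;q)$, $G(x,1;q)$ retain the type $(\,\cdot\,,\cdot\,,\cdot\,,X^5)$ with coefficients analytic in $q$, and their dominant singularities are analytic functions $\rho_b(q)$, $\rho_c(q)=\rho_g(q)$ of $q$ that specialize at $q=1$ to the constants of Theorem~\ref{thm:AsymptoticEstimates}.

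Once this analytic perturbation is in place, the algebraic singularity schema (Theorem~IX.10 of~\cite{fs05}) directly yields a quasi--powers form for the probability generating function of $\mathrm{C}(G)$: for a random $K_{3,3}$-minor-free graph on $n$ vertices,
\[
\mathbb{E}\bigl(q^{\mathrm{C}(G)}\bigr)\;=\;\Bigl(\tfrac{\rho_g(1)}{\rho_g(q)}\Bigr)^{\!n}\bigl(1+O(n^{-1})\bigr),
\]
uniformly for $q$ in a complex neighbourhood of $1$. By Hwang's quasi--powers theorem this gives a central limit law with mean and variance
\[
\kappa=-\frac{\rho_g'(1)}{\rho_g(1)},\qquad \lambda=\kappa+\kappa^2-\frac{\rho_g''(1)}{\rho_g(1)},
\]
and the same constants are obtained for the connected class since $\rho_c(q)=\rho_g(q)$.

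The final step is numerical: differentiate implicitly the system defining $R(q)$, $D_0(q)$, etc., at $q=1$ to obtain $\rho_g'(1)$ and $\rho_g''(1)$, and check that the resulting values of $\kappa$ and $\lambda$ agree with those announced. The main obstacle, as in Theorem~\ref{thm:ExpectedNumEdges}, is not conceptual but verificational: one must confirm that, uniformly for $q$ near $1$, the branch point of $D(\,\cdot\,,1;q)$ remains strictly smaller than any competing singularity (so that no new singularity enters the picture), and that the coefficients $D_i(q)$, $B_i(q)$, $C_i(q)$, $G_i(q)$ remain analytic and the leading singular coefficient $G_5(1;q)$ does not vanish near $q=1$. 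Both conditions hold at $q=1$ by the analysis already carried out for Theorem~\ref{thm:AsymptoticEstimates}, and by continuity they persist in a neighbourhood, which is exactly what the schema requires.
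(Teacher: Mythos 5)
Your proposal is correct and follows essentially the same route the paper takes (implicitly, since the paper only sketches this proof by reference to the algebraic singularity schema): $q$ is carried through the system as an analytic perturbation parameter, the dominant singularity $\rho_g(q)$ is tracked via the $q$-dependence that enters through the term $q x^3 D^9/6$ (reflected in the expression for $h$ in the appendix), and Hwang's quasi-powers theorem yields the Gaussian limit with $\kappa=-\rho_g'(1)/\rho_g(1)$ and $\lambda=\kappa+\kappa^2-\rho_g''(1)/\rho_g(1)$. Your checklist of conditions to verify (persistence of the branch point as the dominant singularity, analyticity of the singular coefficients, non-vanishing of the leading coefficient) is exactly what is needed and holds by continuity from the $q=1$ analysis underlying Theorem~\ref{thm:AsymptoticEstimates}.
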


\section{Graphs not containing $K_{3,3}^+$ as a minor}

In this brief section we give estimates for the number of graphs
not containing $K_{3,3}^+$ (the graph obtained from $K_{3,3}$ by
adding one edge) as a minor. For this we use the following recent
result from \cite{k33plus}.

\begin{theorem}
A 3-connected graphs not containing $K_{3,3}^+$ as a minor is
either planar or isomorphic to $K_{3,3}$ or $K_5$.
\end{theorem}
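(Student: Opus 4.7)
The plan is to argue by cases according to Kuratowski's theorem. Assume $G$ is a 3-connected graph with no $K_{3,3}^+$ minor; if $G$ is planar we are done, so suppose $G$ is non-planar, whence $G$ has either a $K_5$ minor or a $K_{3,3}$ minor. I first dispose of the case that $G$ has no $K_{3,3}$ minor: then $G$ is a 3-connected $K_{3,3}$-minor-free graph possessing a $K_5$ minor, so Wagner's structure theorem (Theorem~\ref{thm:Decomposition} above) expresses $G$ as built from planar graphs and copies of $K_5$ by $0$-, $1$-, and $2$-sums. These operations introduce vertex separators of size at most $2$, which is incompatible with 3-connectivity unless $G$ is itself one of the summands; since $G$ is non-planar, this forces $G = K_5$.

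In the remaining case $G$ has a $K_{3,3}$ minor, and the goal is to deduce $G = K_{3,3}$. I would assume $G \neq K_{3,3}$ for contradiction and fix a $K_{3,3}$ minor model $(X_1,X_2,X_3,Y_1,Y_2,Y_3)$ of minimum total branch-set size. If every branch set is a singleton, then $G$ contains $K_{3,3}$ as a subgraph on six vertices. When $|V(G)|=6$, since $G \neq K_{3,3}$ there is some additional edge, and it must lie within one side of the bipartition because $K_{3,3}$ already contains all cross-edges; this exhibits $K_{3,3}^+$ as a subgraph. When $|V(G)| \geq 7$, take any vertex $w$ outside the $K_{3,3}$; a standard Menger-style argument using 3-connectivity produces three internally disjoint paths from $w$ to three distinct branch vertices with no other branch vertex in their interiors, and by the pigeonhole principle two endpoints lie on the same side of the bipartition. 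Concatenating those two paths yields a path between same-side vertices whose interior can be contracted to a single vertex, producing the forbidden same-side edge and hence a $K_{3,3}^+$ minor.

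The remaining subcase is that some branch set, say $X_1$, has at least two vertices. Let $v$ be a leaf of a spanning tree of $X_1$, so $X_1 \setminus \{v\}$ remains connected; minimality of the model forces $v$ to be the only vertex of $X_1$ adjacent in $G$ to some particular $Y_j$ (otherwise $X_1 \setminus \{v\}$ would give a smaller model). The plan is to split $X_1$ into the two branch sets $\{v\}$ and $X_1 \setminus \{v\}$, obtaining a $7$-branch-set configuration in which the edge from $v$ to its spanning-tree neighbour $v_0 \in X_1 \setminus \{v\}$ is now an inter-branch-set edge, and then contract $\{v\}$ back with an appropriate neighbouring branch set (e.g.\ $Y_j$, or another branch set containing a neighbour of $v$, which exists because $v$ has degree at least $3$ by 3-connectivity) to return to six branch sets. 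A case analysis of where $v$'s remaining neighbours sit should show that one such contraction always produces a $K_{3,3}$ together with an additional same-side edge, i.e., a $K_{3,3}^+$ minor.

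The main obstacle will be this last step: ensuring uniformly across all placements of $v$'s extra neighbours that the split-and-recontract procedure produces the required same-side edge. If the direct case analysis becomes too fragmented, a reliable fallback is induction on $|V(G)|$ via Tutte's wheel theorem, which provides a 3-connectedness-preserving contractible edge $e$ in every non-wheel 3-connected graph on at least five vertices. Applying the inductive hypothesis to $G/e$ reduces the problem to the finite check that every 3-connected vertex-split of a planar graph, of $K_{3,3}$, or of $K_5$ is itself either planar or contains $K_{3,3}^+$ as a minor, and each such split can be verified by elementary contractions of the same flavour as in the singleton case above.
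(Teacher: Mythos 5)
The paper does not prove this statement at all; it is quoted as a known result and attributed to Chleb\'{\i}kov\'a~\cite{k33plus}. You are therefore attempting a self-contained proof that the authors deliberately outsourced, which is a genuinely different route. Parts of your argument are sound: the reduction via Kuratowski, the disposal of the $K_5$-minor/no-$K_{3,3}$-minor case via Theorem~\ref{thm:Decomposition} and the impossibility of nontrivial $0$-, $1$-, $2$-sums in a $3$-connected graph, and the fan/Menger pigeonhole argument in the subcase where every branch set of the minimal $K_{3,3}$-model is a singleton all work as stated.

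However, there are two real gaps, so the proof as written is incomplete. First, the subcase in which some branch set $X_1$ has at least two vertices is only a plan. You correctly extract a leaf $v$ of a spanning tree of $X_1$ and observe via minimality that $v$ is the unique vertex of $X_1$ adjacent to some $Y_j$, but the concluding step --- ``a case analysis of where $v$'s remaining neighbours sit should show that one such contraction always produces \dots a $K_{3,3}^+$ minor'' --- is precisely where all the content of the theorem lives, and you explicitly flag it as unresolved. For instance, if $v$'s extra neighbours all lie inside $X_1$ or inside the same $Y_j$, the split-and-recontract move produces nothing new, and it is not at all clear that the desired same-side edge materialises without a substantially more careful argument (or a different choice of object to minimise, e.g.\ a $K_{3,3}$-subdivision rather than a minor model). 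Second, the fallback via a contractible edge does not ``reduce to a finite check.'' When $G/e \cong K_{3,3}$ or $K_5$ the check is finite, but when $G/e$ is planar the inductive hypothesis only tells you $G$ is a $3$-connected vertex-split of an arbitrary $3$-connected planar graph; there are infinitely many such graphs, and a vertex-split of a planar graph can certainly be non-planar. Showing that every such non-planar split has a $K_{3,3}^+$ minor is essentially the same difficulty you were trying to avoid, so the induction does not close.
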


The analogous to Lemma \ref{Bgf} holds, that is we get $B(x,y)$
with an additional term (now we set $q=1$)
$$
 10 D(x,y)^9 {x^6 \over 6!}.
$$
This is because $K_{3,3}$ has 6 vertices, 9 edges, and 10
different labellings. The equation for  $D(x,y)$  in Lemma
\ref{lem:DecompositionConnectivityK33} has to be modified too in
order to take into account $K_{3,3}$, and one has to add a term
(again we set $q=1$) $x^4 D^8 /4$.

There is a corresponding expression for the singular coefficients
$B_i$ at the dominant singularity $R(y)$, which we do not write
down in detail in order to avoid repetition. We obtain the
dominant singularity $\rho(y)$ for the generating functions
$C(x,y)$ and $G(x,y)$, compute the corresponding singular
expansions and obtain the following result.

\begin{theorem}
  Let $g_n$, $c_n$, and $b_n$ denote the number of not necessarily connected,
  connected and 2-connected resp.\ $K_{3,3}^+$-minor-free graphs on $n$
  vertices. Then it holds
  \begin{eqnarray}
    g_n & \sim & \alpha_g \; n^{-7/2} \; \rho_g^{-n} \; n!,
\\
    c_n & \sim & \alpha_c \; n^{-7/2} \; \rho_c^{-n} \; n!,
\\
    b_n & \sim & \alpha_b \; n^{-7/2} \; \rho_b^{-n} \; n!,
  \end{eqnarray}
  where  $\rho_c^{-1} =
  \rho_g^{-1} \doteq 27.22948$, and $\rho_b^{-1} \doteq 26.18672$ are
  analytically computable constants.
\end{theorem}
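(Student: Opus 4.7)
The plan is to retrace the development of Section~\ref{sec:K33MinorFree} with two additional summands in the governing equations, reflecting that under the cited 3-connected structure theorem $K_{3,3}$ now joins the planar graphs and $K_5$ as a possible 3-connected building block. First I would insert into Equation~(\ref{eqn:3connected}) the extra term $x^4 D^8/4$ accounting for $K_{3,3}$: it has $6$ vertices, $9$ edges, and automorphism group of order $72$ (so $10$ labellings), and the root-edge/root-vertex bookkeeping in Walsh's formula yields the coefficient $1/4$ stated in the paragraph above the theorem. Integrating with respect to $y$ propagates this modification to $B(x,y)$, producing, in the analogue of Lemma~\ref{Bgf}, the additional term $10\,D(x,y)^9 x^6/6!$ corresponding to placing an edge-rooted network on each of the nine edges of a labelled $K_{3,3}$.

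With the updated network equation in hand, I would redo the singularity analysis of $D(x,y)$ along the lines of Lemma~\ref{lem:networks}. Because the new term is polynomial in $x$ and $D$, the branch-point argument of Lemma~3 of~\cite{bgw02} applies essentially unchanged: for $y$ in a small neighbourhood of $1$ there is a unique dominant singularity $R(y)$, and $D(x,y)$ admits an expansion of the same shape $D_0(y)+D_2(y)X^2+D_3(y)X^3+O(X^4)$ with $X=\sqrt{1-x/R(y)}$, only the coefficients $D_i(y)$ being updated. Plugging these into the modified Lemma~\ref{Bgf} yields a singular expansion for $B(x,y)$ of the form $B_0(y)+B_2(y)X^2+B_4(y)X^4+B_5(y)X^5+O(X^6)$, again with $B_i(y)$ analytic near $y=1$.

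Lifting to connected and general graphs then proceeds exactly as in Lemma~\ref{lem:connectedAndGeneralExpansion}: the argument underlying Claim~1 of~\cite{gn05} is formal and depends only on the type of the 2-connected expansion, so it transfers verbatim and produces singular expansions of $C(x,y)$ and $G(x,y)$ at a common dominant singularity $\rho(y)$, again of $X^5$-type. The transfer theorem (Corollary~VI.1 of~\cite{fs05}) then delivers the claimed asymptotic formulas $\alpha\cdot n^{-7/2}\rho^{-n} n!$, with the numerical constants $\rho_b^{-1}\doteq 26.18672$ and $\rho_c^{-1}=\rho_g^{-1}\doteq 27.22948$ obtained by solving the updated implicit systems at $y=1$; their near-coincidence with the $K_{3,3}$-minor-free values in Theorem~\ref{thm:AsymptoticEstimates} is a quantitative expression of how little the extra $K_{3,3}$ contribution displaces the singularity.

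The main obstacle is to verify that the dominant singularity of $D$ still arises from the same branch-point phenomenon (rather than from some new feature introduced by the added polynomial term) and that the leading singular coefficient $B_5(1)$ remains nonzero, so that the $X^{5}$-type is preserved and the transfer theorem yields exponent $-7/2$. Both facts follow from the analyticity and from the smallness of the $K_{3,3}$-contribution near the candidate singularity, yet confirming them and re-evaluating the updated implicit systems is a routine if tedious computation, best carried out in a computer algebra system exactly as in the preceding section.
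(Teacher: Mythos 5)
Your proposal mirrors the paper's argument essentially step for step: insert the edge-rooted $K_{3,3}$ contribution $x^4 D^8/4$ into the network equation and the corresponding $10\,D^9 x^6/6!$ into $B$, rerun the $D\to B\to C\to G$ singularity analysis of Section~\ref{sec:K33MinorFree} with the modified coefficients, and apply the transfer theorem. This is exactly what the paper does (and your derivation of the $1/4$ via the same $\frac{2}{x^2}\partial_y$ bookkeeping used for $K_0$ in Section~\ref{sec:MaximalK33MinorFree} is correct), so the approaches coincide.
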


Here is a table showing the approximate values of the growth
constants for planar, $K_{3,3}$-minor-free and
$K_{3,3}^+$-minor-free graphs.

\begin{table}[htb]
$$\begin{tabular}{|l|c|c|} \hline
Class of graphs & Growth constant & Growth constant for 2-connected \\
\hline
Planar &   27.22688 &  26.18486  \\
$K_{3,3}$-minor free   &  27.22935 & 26.18659 \\
$K_{3,3}^+$-minor free &  27.22948 & 26.18672 \\
\hline
\end{tabular}
$$
\end{table}

It is natural to ask if one can go further and treat the case
where the forbidden minor is obtained from $K_{3,3}$ by adding
\emph{two} edges. If the two edges share a vertex, then the
resulting graph is $K_{1,2,3}$; if the two edges do not share a
vertex, let us denote denote by $L$ the resulting graph.

Enumeration when we forbid $K_{1,2,3}$ is in principle feasible
along the previous lines because of the following theorem due to
Halin \cite{halin} (see also \cite[Section 6.1]{diestel}). The
Wagner graph $W$ consists of a cycle of length 8  in which
opposite vertices are adjacent.

\begin{theorem}
A 3-connected graph not containing $K_{1,2,3}$ as a minor is
either planar or isomorphic to  $K_5, W, L$ or to nine sporadic
non-planar graphs, or to a 3-connected subgraph of these.
\end{theorem}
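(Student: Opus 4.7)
The plan is to proceed by case analysis driven by Wagner's classical theorem for $K_{3,3}$-minor-free graphs. First I would observe that $K_{1,2,3}$ contains $K_{3,3}$ as a subgraph, by taking $\{a,b_1,b_2\}$ and $\{c_1,c_2,c_3\}$ as the two sides of the bipartition. Consequently, every $K_{3,3}$-minor-free graph is automatically $K_{1,2,3}$-minor-free, so by Wagner's theorem the 3-connected, $K_{3,3}$-minor-free graphs (which are planar, or $K_5$) already belong to the list. It remains to classify the 3-connected $K_{1,2,3}$-minor-free graphs that do contain a $K_{3,3}$-minor.

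For these, I would fix a $K_{3,3}$-subdivision in $G$ (which exists by 3-connectivity once $K_{3,3}$ is a minor) with branch vertices $\{a_1,a_2,a_3\}\cup\{b_1,b_2,b_3\}$ and internally disjoint paths $P_{ij}$ from $a_i$ to $b_j$. The key algebraic observation is that $K_{1,2,3}$ is obtained from $K_{3,3}$ by adding two edges that share a vertex on one side, say the edges $ab_1$ and $ab_2$ with apex $a$. Hence $G$ contains $K_{1,2,3}$ as a minor precisely when one can route two additional internally disjoint paths joining a common branch vertex (or branch set) on one side to two other branch vertices on the same side, disjoint from the interiors of the $P_{ij}$. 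The strategy is to translate this ``no extra double ear on one side'' condition into a combinatorial constraint on how additional vertices, edges, and chords can attach to the fixed subdivision. Using 3-connectivity of $G$ together with Menger's theorem, every vertex of $G$ outside the subdivision attaches via at least three internally disjoint paths, and each attachment is classified by which of the paths $P_{ij}$ (or branch vertices) it meets; a finite case analysis then shows that only a bounded number of attachment patterns avoid creating the forbidden double ear.

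From there, each admissible pattern either extends uniquely to one of the explicitly listed graphs, namely $K_5$, the Wagner graph $W$, the graph $L$, or one of the nine sporadic non-planar graphs (or to a 3-connected subgraph thereof), or else admits a reduction to a strictly smaller 3-connected $K_{1,2,3}$-minor-free graph by a splitter-type contraction, licensing induction on $|V(G)|$. The main obstacle is the explicit enumeration of the nine sporadic graphs: one must exhaustively list the ways of gluing extra edges and chords onto a $K_{3,3}$-subdivision compatibly with 3-connectivity, verify at each step that no $K_{1,2,3}$-minor is created, and identify when two a priori distinct constructions yield isomorphic graphs. This delicate combinatorial bookkeeping is essentially the content of Halin's original argument \cite{halin}; it is purely structural and does not interact with the generating-function and singularity-analysis machinery developed in the earlier sections of the paper.
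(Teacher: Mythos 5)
The paper does not actually prove this theorem; it is stated as a black-box result of Halin \cite{halin}, with a pointer to \cite[Section~6.1]{diestel}, and no argument is given. So there is no proof in the paper against which to compare your sketch in detail.

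That said, your outline is internally consistent and the preliminary observations are correct: $K_{1,2,3}$ does contain $K_{3,3}$ as a spanning subgraph (so $K_{3,3}$-minor-free implies $K_{1,2,3}$-minor-free), $K_{1,2,3}$ is indeed $K_{3,3}$ plus two edges sharing a vertex on one side (as the paper itself notes), and the $3$-connected $K_{3,3}$-minor-free graphs are exactly the $3$-connected planar graphs together with $K_5$, all of which are already in the target list. The reduction to the case that $G$ contains a $K_{3,3}$-subdivision (using that $K_{3,3}$ has maximum degree three, so minor and topological minor coincide for it) is also fine, as is the Menger-based attachment analysis. However, the entire content of the theorem lies in the claim you dispatch with ``a finite case analysis then shows that only a bounded number of attachment patterns avoid creating the forbidden double ear'' and the subsequent assertion that each such pattern yields one of $K_5$, $W$, $L$, or nine sporadic graphs. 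You do not carry out this enumeration, nor do you justify the splitter-type induction; you acknowledge this yourself by deferring to Halin's original argument. So your proposal is not a proof but a plausible road map whose hard step is precisely the one the paper also outsources to the citation. Since the paper does the same, there is no disagreement in method, only the shared fact that neither text actually establishes the result from scratch.
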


This means that the generating function for 3-connected graphs not
containing $K_{1,2,3}$ as a minor  is obtained by adding a finite
number  of monomials to the generating function of 3-connected
planar graphs, which is already known to us, exactly as in the
previous section. However performing all the computations
corresponding to this case means: first to get the full list of
exceptional 3-connected graphs up to isomorphism from the previous
theorem, and in each case to compute the automorphism group in
order to determine the number of different labellings in each
case; and then to compute the singular expansions of all the
generating functions involved. We have refrained from doing these
computations, which would be along the same lines as before but
likely very cumbersome.

However, forbidding $L$ as a minor is another story and definitely
we cannot solve this problem at this stage. The reason is that in
this case one needs to take 3-sums (gluing along triangles) of
graphs in order to describe the family of 3-connected graphs not
containing $L$ as a minor \cite{diestel}, and we do not have the
necessary machinery to translate it into equations satisfied by
the generating functions. This problem already appears if we try
to count graphs not containing $K_5$ as a minor (notice that $L$
contains $K_5$ as a proper minor). We believe this is a
fascinating open problem that, if solved, will no doubt require
new ideas and techniques.

\medskip {\bf Acknowledgement.} The authors want to thank Eric Fusy
and Konstantinos Panagiotou for many helpful discussions and for
proofreading earlier versions of this manuscript.


\appendix
\pagebreak

\section{Appendix}
\label{sec:AppendixA}

Here, we give the expressions for the coefficients of the singular expansions of
$D(x, y)$, $U(x, y)$, $B(x, y)$, $C(x, y)$ and $G(x, y)$ as well as the
expressions for the singularities. We use the same approach as in~\cite{bgw02}
and parametrize the expressions in the complex variable $t$.

The variable $q$ used for counting the number of $K_5$ appears
explicitly only in the expression for $h$; the reason is that this
is the only place where it is needed for computing the radius of
convergence, which in turn is needed for estimating the expected
value and variance in Theorem \ref{thm:ExpectedNumK5s}.

\begin{eqnarray*}
  h & = & \frac{t^2}{8192 {(3t+1)^6 (2t+1) (t+3)}} \left(13122 q t^9 + 45927 q t^8 -
  1658880 t^7 + 19683 q t^7 \right. \\
  & & - 12496896 t^6 - 8847360 t^5 + 6832128 t^4 +
  \left. 10399744 t^3 + 4739072 t^2 + 958464 t + 73728 \right) \\
  Y(t) & = & - \frac{2t + 1}{(3t + 1) (t-1)} e^{-h} - 1 \\
  \zeta & = & - \frac{(t - 1)^3 (3t + 1)}{16 t^3} \\
  Q & = & 78732 t^9 - 1328940 t^8 - 26889705 t^7 - 153744066 t^6 - 415828997 t^5
  - 522964992 t^4 \\
    & & - 342073344 t^3 - 121237504 t^2 - 22151168 t - 1638400 \\
  K & = & 78732 t^{11} + 472392 t^{10} - 2668221 t^9 - 816345 t^8 + 92026557 t^7
  + 562023429 t^6 \\
  & & + 1040556032 t^5 + 926367744 t^4 + 455663616 t^3 + 127336448 t^2 +
  19005440 t + 1179648 \\
  U_0 & = & \frac{1}{3t} \\
  U_1 & = &  - \left(-\frac{2}{27} \frac{(3t + 1) K}{t^3 (t + 1) Q}
  \right)^{\frac12} \\
  U_2 & = & - \frac{(3t + 1)^2}{54 t^2 (t + 1)^2 Q^2} \left(
  6198727824 t^{20} + 180231719760 t^{19} + 891036025560 t^{18} \right. \\
  & & \left. - 12902936763600
  t^{17} - 197722264231071 t^{16} - 1821396525148269 t^{15} \right. \\
  & & - 13816272361145022 t^{14} - 79424397121737354 t^{13} - 324711461744767867 t^{12} \\
  & &  -931873748086896665 t^{11} - 1881275802907541504 t^{10} -
  2713502925437276160 t^9 \\
  & & - 2843653010633469952 t^8 - 2190731661037666304 t^7 - 1246514524950953984
  t^6 \\
  & & - 521994799964094464 t^5 - 158674913803108352 t^4 - 34025665074298880
  t^3\\
  & & \left. - 4876321721155584 t^2 - 418948289921024 t - 16312285790208 \right) \\
  D_0 & = & - \frac{3t^2}{(3t + 1) (t - 1)} \\
  D_1 & = & 0 \\
  D_2 & = & - \frac{t (2t + 1)^2}{(3t + 1) (t - 1) Q} \left(19683 t^8 + 118098
  t^7 - 1592325 t^6 - 10616832 t^5 - 30670848 t^4 \right. \\
  & & \left. + 7602176 t^3 + 24444928 t^2 + 9830400 t + 1179648 \right) \\
  D_3 & = & \frac{131072}{9 Q^2} \left(  \left( -\frac{(3t +1) K}{t^3 (t+1) Q}
  \right)^{\frac12} \sqrt{6} t^2 (3t + 1) (t + 3)^2 (2t + 1)^2 K \right)
\end{eqnarray*}

\begin{eqnarray*}
%
  P_1 & = &  1549681956{t}^{19}-60580022472{t}^{18}-965388262815{t}^{17}-
2822075181459{t}^{16} \\
      &   &  -63004687280883{t}^{15}-1326793976317287{t}
^{14}-11608693177471470{t}^{13} \\
      &   & -55082955555464994{t}^{12} -157459666865762304{t}^{11}-279393068914421760{t}^{10} \\
      &   & -323288788914892800{t}^{9}-254483996115259392{t}^{8} -139939270751358976{t}^{7} \\
      &   & -54299625067175936{t}^{6}-14753365577572352{t}^{5}-2718756694392832{t}^{4} \\
      &   & -314310035243008{t}^{3}-18285655490560{t}^{2}-5905580032t+40265318400 \\
  P_2 & = & -472392 t^{12}-2991816 t^{11}+15064542 t^{10}+10234512
  t^9-550526652 t^8\\
  & & -3556193688 t^7-7367383050 t^6-7639318528 t^5-4586717184
  t^4-1675345920 t^3\\
  & & -368705536 t^2-45088768 t -2359296 \\[4ex]
     B_0 & = & \frac{1}{4}\ln(3+t)
               -\frac{(3t+1)^2(-1+t)^6\ln(2t+1)}{1024 t^6}
               -\frac{(3t^4-16t^3+6t^2-1)\ln(3t+1)}{32 t^3} \\
         & &   -\frac{1}{2}\ln(t)-\frac{3}{2}\ln(2)
               +\frac{(3t-1)^2(1+t)^6\ln(1+t)}{512 t^6}
  -{\frac {\left( t-1 \right) ^{2}}{41943040 {{t}^{4}
      \left( 3t+1 \right) ^{5} \left( t+3 \right) }}} \\
  & &
   \left( 19683{t}^{13} -131220{t}^{12} -183708{t}^{11} +
  360921744{t}^{10} + 2005423731{t}^{9} + 3887177580{t}^{8}+\right . \\
   & & \left. 5603033310{t}^{7}+ 4821770240{t}^{6}+ 2013921280{t}^{5}
      +229048320{t}^{4} -97157120{t}^{3} \right. \\
   & & \left. -31436800{t}^{2} -2048000t+122880\right) \\
  B_1 & = & 0 \\
  B_2 & = &
           -\frac{(3t-1)(3t+1)(1+t)^3(-1+t)^3\ln(1+t)}{256t^6}
           +\frac{(3t+1)^2(-1+t)^6\ln(2t+1)}{512t^6} \\
   & &  +\frac{(3t+1)(-1+t)^3\ln(3t+1)}{32t^3}
   +{\frac {\left( t-1
      \right) ^{4}}{8388608 {{t}^{4} \left( t+3 \right)  \left( 3t+1 \right) ^{5}
  }}}  \left( 19683{t}^{11}-13122{t}^{10} \right. \\
 & & -190269{t}^{9}+122862096{t}^{8} +626914188{t}^{7}+555393024{t}^{6}+28803072{t}^{5} \\
 & & \left. -163438592{t}^{4}-81084416{t}^{3} -14852096{t}^{2}-720896t+ 49152 \right) \\
  B_3 & = & 0 \\
  B_4 & = & -\frac{(-1+t)^5P_1}{8388608 t^4 (t+3) (3t + 1)^5 Q}  - \frac{9 (t +
  \frac13)^2 (t - 1)^6 (-2 \ln(t+1) + \ln(2t + 1))}{1024 t^6} \\
  B_5 & = & - \frac{\sqrt{\frac{3 P_2}{t^3 (t + 1) Q}} P_2^2 (t-1)^6}{2880
  (3t + 1)^5 (t+1) t Q^2}
\end{eqnarray*}

\begin{align*}
  C_0 & =  R + B_0 + B_2     & G_0 & = \exp(C_0)\\
  C_1 & =  0                 & G_1 & = 0\\
  C_2 & =  -R                & G_2 & = \exp(C_0)C_2\\
  C_3 & =  0                 & G_3 & = 0\\
  C_4 & =  -\frac{1}{2}\left(R + \frac{R^2}{2 B_4 - R}\right)
                             & G_4 & = \exp(C_0) \left(C_4 + \frac{C_2^2}{2} \right) \\
  C_5 & =  B_5 \left(1 - \frac{2 B_4}{R}\right)^{-\frac52}
                             & G_5 & = \exp(C_0) C_5
\end{align*}

\end{document}